\newtheorem{thm}{Theorem}
\newtheorem{definition}{Definition}[section]
\newtheorem{lemma}[definition]{Lemma}
\newtheorem{prop}[definition]{Proposition}
\newtheorem{rmk}[definition]{Remark}
\newtheorem{example}[definition]{Example}
\newcommand{\diag}{\text{diag}}
\newcommand{\spec}{\text{Spec}}
\newcommand{\lcm}{\text{lcm}}
\def\la{\langle}
\def\ra{\rangle}
\begin{document}

%title-------------------------------------------------------------
\title{On the original Ulam's problem and its quantization}
\date{30 May 2025}
\author{Changguang Dong$^*$, Jing Zhou$^\dagger$}
\thanks{$^*$ Chern Institute of Mathematics and LPMC, Nankai University, Tianjin, China. Email: dongchg@nankai.edu.cn.
$^\dagger$ School of Sciences, Great Bay University, Dongguan, China. Email: zhoujing@gbu.edu.cn.\\
MSC classes: 37N05, 81Q50 (Primary), 35Q41, 37E10 (Secondary).\\
Keyword: Fermi acceleration, Fermi-Ulam model, resonance, escaping orbits, quasi-energy spectrum.}
%-------------------------------------------------------------

%abstract-------------------------------------------------------------
\begin{abstract}
    We show that under general resonance the classical piecewise linear Fermi-Ulam accelerator behaves substantially different from its quantization, in the sense that the classical accelerator exhibits typical recurrence and non-escaping while the quantum version enjoys quadratic energy growth in general. We also describe a procedure to locate the escaping orbits, though exceptionally rare in the infinite-volume phase space, for the classical accelerators, which in particular include Ulam's very original proposal and the linearly escaping orbits therein in the existing literature, and hence provide a complete (modulo a null set) answer to Ulam's original question. For the quantum accelerators, we reveal under resonance the direct and explicit connection between the energy growth and the shape of the quasi-energy spectrum.
\end{abstract}
%-------------------------------------------------------------

\maketitle

%introduction-------------------------------------------------------------
\section{Introduction}

In 1949 Fermi \cite{Fermi49} proposed an acceleration mechanism, as to explain for the existence of high energy particles in cosmic rays, that charged particles get repeatedly reflected by moving magnetic mirrors, arguing that the chances of head-on interactions outnumber the head-tail ones so that the particles would on average get accelerated. Later in 1961 Ulam \cite{Ulam61} extracted the effective mechanism in Fermi acceleration, i.e. a particle bounces between two infinitely heavy walls, one fixed and the other moving periodically (c.f. Fig.\ref{fig:originalFUM}). Ulam \cite{Ulam61} performed numerical simulations with a specific piecewise linear wall motion (i.e. $A=1/\sqrt2, B=\sqrt2, T=1$ in Fig.\ref{fig:originalFUM}) and then conjectured that such a model should produce \emph{escaping orbits}, i.e. those whose energy grows to infinity in time.\\ 

Since then, numerous efforts, theoretically and numerically, have been made by physicists and mathematicians in the study of the classical Fermi-Ulam accelerators and we refer to \cite{Dol-FA,GRKT12,LiLi80} and references therein for detailed survey on the subject. The application of KAM theory leads to non-acceleration results for sufficiently smooth Fermi-Ulam accelerators and their variations \cite{Dol08potential,kuOr20,LaLe91,Pu83,Pu94,Zhar2000}. On the other hand, escaping orbits appear when singularities are allowed \cite{deSD12,Zha98,Zhou21}, ambient potentials are introduced \cite{AZ15,DS09,Pu77}, and also when higher-dimensions are considered \cite{GT08nonauto,GT08ham,KZ23,STR10,Zhou20}. Classical techniques such as Birkhoff normal forms and hyperbolic dynamics play fundamental roles in the study of classical Fermi-Ulam accelerators.

For the quantum Fermi-Ulam accelerators, the parallel non-acceleration results were obtained for generic sufficiently smooth wall motions \cite{karn94,Seba90}, with the help of quantum KAM theory \cite{how89,how98}. Quantum chaos and acceleration behavior were expected both in numerics \cite{glr14,joco86} and in theory \cite{DoeRi69,Seba90} when singularities are allowed in the models. The quantum Fermi-Ulam accelerators are one of the earliest examples of quantum systems with time-dependent boundary conditions \cite{DoeRi69}, and have then inspired the development of many important techniques in the study of mesoscopic physics \cite{Bas88,cbpwc96}, quantum control \cite{lbmw03,MaNa10}, and nano-devices \cite{AKM14,PiDBMS15}, as well as experimental realizations such as cold atoms in dynamic traps \cite{HalGMDHPN10}.\\

Notably among the existing literature on the subject, the closest related to our setting are a special case of piecewise linear Fermi-Ulam accelerator \cite{Zha98} and its quantization \cite{Seba90}. In a specific piecewise linear classical Fermi-Ulam accelerator with the same particular choice of system parameters as Ulam \cite{Ulam61} (i.e. $A=1/\sqrt2, B=\sqrt2, T=1$ in Fig.\ref{fig:originalFUM}), Zharnitsky \cite{Zha98} found explicitly a family of linearly escaping orbits. However, these escaping orbits constitute a null set, leaving unanswered the behavior of the majority of orbits on the phase space which has infinite volume. Besides, it was unclear therein \cite{Zha98} if any other choices of parameters can also create acceleration. This issue of ``mystery of lucky parameters" also appears in \v{S}eba's study of the piecewise linear quantum accelerators where he obtained quantum acceleration with the choice of parameters corresponding to the special $1:1$ resonance case in our definition (c.f. Definition \ref{def:quantumresonanace} and Example \ref{example:11resonance}).\\ 

In this paper we study a general class of piecewise linear Fermi-Ulam accelerators. We show that under general resonance conditions on the system parameters, the classical accelerators exhibit typical recurrence and the escaping orbits constitute a zero-measure set on the phase space of infinite volume. Meanwhile, the quantum accelerators under general resonance enjoy quadratic energy growth in general, if not always, and the quasi-energy spectra are absolutely continuous with finitely many components. Our results cover and greatly generalize those in \cite{Zha98} and \cite{Seba90} as special cases. 

Also, we provide an explicit procedure on how to locate the escaping orbits, though exceptionally rare, in the classical Fermi-Ulam accelerators under general resonance. In particular, this includes Ulam’s very original proposal \cite{Ulam61} (which is the special resonance case in our Definition \ref{def:resonance}), and the linearly escaping orbits found by Zharnitsky therein \cite{Zha98} correspond to an accelerating segment on a very particular rational invariant curve in our language (c.f. Example \ref{example:escbdd}), and hence we provide a complete answer (modulo a null set) to Ulam’s original question. In addition, for the quantum accelerators, we reveal the direct connection between the energy growth behavior and the shape of the quasi-energy spectra with explicit formulas. 

From a technical perspective, in the classical accelerators, the rigidity of the piecewise linearity forces the dynamics to be singular and parabolic, making it impossible to apply classical tools such as KAM theory or hyperbolicity as has done in many of the existing literature.  For quantum accelerators, the time dependence, especially the delta kicks in wall motions, prevents direct application of a lot machinery for handling Schr\"odinger equations, just as in the case of the (in)famous kicked rotators. We combine several techniques from adiabatic theory, infinite ergodic theory, and quantum Floquet theory to tackle the challenges.\\

The paper is organized as follows. We dissect the presentation of results on the classical piecewise linear Fermi-Ulam models in \S\ref{sec:cFU} and the corresponding quantum models in \S\ref{sec:QFU}, and conclude with discussions on open problems in \S\ref{sec:conc}. In \S\ref{sec:cFU}, we begin with the adiabatic change of coordinates and the adiabatic normals, which serve as the Poincar\'e maps for the classical accelerators, and then we prove the typical recurrence results under general resonance. In \S\ref{sec:QFU}, we first transfer the time dependence in boundary conditions to time-dependent potentials in the Schr\"odinger equations, then we derive the Floquet wave propagators, and finally we prove the quadratic energy growth and the absolute continuity of the quasi-energy spectra under general resonance.

%-------------------------------------------------------------
\section{The Classical Fermi-Ulam models}\label{sec:cFU}
We consider the original Fermi-Ulam model \cite{Ulam61}, where a point particle bounces elastically between two infinitely heavy walls, one is fixed and the other moves periodically (c.f. Fig.\ref{fig:originalFUM}). Let $l(t)$ denote the distance between two walls. In Ulam's original proposal \cite{Ulam61}, the periodic wall motion $l(t)$ is piecewise linear:
\[l(t) = 
   \begin{cases}
       B - k t & t\in[0,T)\\
       A + k (t-T) & t\in[T,2T)
   \end{cases}
\]
where $A\ne B$ are positive parameters and $k:= \dfrac{B-A}{T}$ is the slope. 

Throughout the paper, we assume without loss of generality that $B>A$ and hence $k>0$.

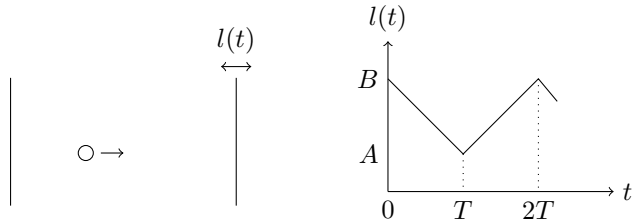
\begin{figure}[!ht]
\centering
\begin{minipage}{0.3\textwidth}
   \begin{tikzpicture}
       \draw (0,0.3)--(0,2) (3,0.3)--(3,2);
       \draw[<->] (2.8,2.15)--(3.2,2.15);
       \draw (1,1) circle (0.1);
       \draw[->] (1.2,1)--(1.5,1);
       \draw (3,2.2) node[anchor=south]{$l(t)$};
   \end{tikzpicture}   
\end{minipage}%
\hspace{20pt}%
\begin{minipage}{0.3\textwidth}
   \begin{tikzpicture}
        \draw[->] (0,0)--(3,0) node[anchor=west]{$t$};
        \draw[->] (0,0)--(0,2) node[anchor=south]{$l(t)$};
        \draw (0,1.5)--(1,0.5)--(2,1.5)--(2.25,1.2);
        \draw[dotted] (1,0.5)--(1,0) (2,1.5)--(2,0);
        \draw (0,1.5) node[anchor=east]{$B$}
              (0,0.5) node[anchor=east]{$A$}
              (0,0) node[anchor=north]{0}
              (1,0) node[anchor=north]{$T$}
              (2,0) node[anchor=north]{$2T$};
    \end{tikzpicture}
\end{minipage}
\caption{The original piecewise linear Fermi-Ulam model}
\label{fig:originalFUM}
\end{figure}

The goal of this section is to show in Theorem \ref{thm:rec} that under general resonance (as in Definition \ref{def:resonance}) the classical accelerators exhibit typical recurrence and the escaping orbits constitute a zero-measure set on the phase space of infinite volume. We also provide an explicit procedure for how to locate these exceptionally rare escaping orbits in the classical Fermi-Ulam accelerators under general resonance. In particular, this includes Ulam’s very original proposal (i.e. $A=1/\sqrt2, B=\sqrt2, T=1$) \cite{Ulam61} as a particular case of the special resonance case, and the linearly escaping orbits found by Zharnitsky therein \cite{Zha98} correspond to an accelerating segment on a very particular rational invariant curve (c.f. Example \ref{example:escbdd}).

The vital change of coordinates for the special parameters (i.e. $A=1/\sqrt2, B=\sqrt2, T=1$) as in \cite{Zha98} cannot be easily generalized, in the authors' attempt, to other choices of parameters satisfying general resonance. Instead, we begin, in \S\ref{ssec:ad}, with the adiabatic change of coordinates (c.f. Lemma \ref{lemma:adiacoord}) and the adiabatic normals (c.f. Proposition \ref{prop:adianorm}), which describe the behavior of the particle in roughly one period. Then in \S\ref{ssec:skew} we show that the adiabatic normal forms possess invariant circles (c.f. Lemma \ref{lemma:invcirc}) and then we reveal the hidden skew product structure (c.f. Proposition \ref{prop:skewprod}) on each invariant circle due to general resonance. Finally in \S \ref{ssec:rec} we prove Theorem \ref{thm:rec} of the typical recurrence results for resonant accelerators and also describe an explicit procedure to locate escaping orbits on the rational invariant circles.

%-------------------------------------------------------------
\subsection{The Adiabatic Normal Forms}\label{ssec:ad}

In this section the main goal is to derive the adiabatic normal forms (c.f. Proposition \ref{prop:adianorm}) which describe the momentum change of the particle in roughly one period $\Delta t = 2T$ for large energies. 

We proceed in two stages: firstly we show, in adiabatic coordinates, that the momentum of the particle remains constant along collisions away from singularities (i.e. Lemma \ref{lemma:adiacoord}), and secondly we show, in adiabatic normals, that the momentum changes dramatically when the particle encounters singularities (i.e. Proposition \ref{prop:adianorm}). 

The techniques from adiabatic theory have been successfully applied in the study of various Fermi-Ulam accelerators \cite{deSD12,KZ23,Zhou20}.

%-------------------------------------------------------------
\subsubsection{The Adiabatic Coordinates}

We start with the derivation of the adiabatic coordinates and show that the momentum (in adiabatic coordinates) of the particle remains constant along collisions away from singularities.\\

Let $t$ denote the moment of collision at the moving wall and $v$ the velocity of the particle immediately after collision at the moving wall. We denote by $f$ the collision map that sends one collision $(t_0,v_0)$ to the next one $(t_1,v_1)$. Then for $v_0$ large, we have the following equations of motion between two consecutive collisions: 
\begin{equation}\label{eq:collisoneqs}
    \begin{cases}
        v_0 (t_1 - t_0) = l(t_0) + l(t_1) \\
        v_1 = v_0 - 2\dot{l}(t_1)
    \end{cases}
\end{equation}

In spirit of the adiabatic theory, we observe that $$v_1-v_0 \approx -2\dot{l}(t_0), \; t_1-t_0 \approx \frac{2l(t_0)}{v_0},$$ which leads to the following ODE $$\frac{dv}{dt} = \frac{-v\dot{l}}{l},$$ suggesting that $I:=lv$ is almost constant. Then we update the algorithm by replacing $v$ with $I=lv$ and look for the next-order adiabatic invariant. On the other hand, the new ``time" $\theta$ needs to behave in the following way $$\theta_1-\theta_0 = \dfrac{C}{I},$$ which leads to another ODE (we aim for constant $C=1$ for simplicity) $${\theta}'\frac{2l}{v}=\frac{1}{lv},$$ giving $\displaystyle \theta(t) = \int_0^t l^{-2}(s) ds$.\\

Ideally, the Euler scheme above would terminate in finite steps since we are dealing with (piecewise) linear wall motions $l(t)$. We propose the following formulas for the adiabatic coordinates and verify in Lemma \ref{lemma:adiacoord} that this is indeed true.

We introduce the new (half) period $\displaystyle \mathcal{T}:=\int_0^T \dfrac{ds}{l(s)^2} = \dfrac{T}{AB}$. We define the adiabatic coordinates $(\theta,I)$ as follows 
\begin{equation}\label{eq:adiacoordef}
    \begin{cases}
        I(t,v) := \mathcal{T} ( l(t)v + l(t)\dot l(t) )\\
    \theta(t) := \dfrac{1}{2\mathcal{T}} \displaystyle\int_0^t \dfrac{ds}{l(s)^2}.
    \end{cases}
\end{equation}

Now we verify that $(\theta,I)$ as defined in \eqref{eq:adiacoordef} are indeed adiabatic in the sense that the new momentum $I$ is constant along collisions away from singularities. 

Let $S_0$ $(S_T)$ denote the singular collisions occurring at $t=0$ ($t=T$):
\[
   S_0 :=\{(t,v)|t=0\}, \; S_T :=\{(t,v)|t=T\}.
\]

Let $R_0$ ($R_T$) be the singularity strip bounded by $S_0$ ($S_T$) and $f(S_0)$ ($f(S_T)$) in the $(t,v)$-phase cylinder. $R_0$ ($R_T$) is, in fact, the Poincar\'e section that collects the very first collisions right after the moment $t=0$ ($t=T$) (c.f. Fig.\ref{fig:adianorm}). The name ``adiabatic" manifests itself in the following lemma. 

\begin{lemma}[Adiabatic coordinates]\label{lemma:adiacoord}
For $(t_0,v_0)\notin f^{-1}(R_0) \cup f^{-1}(R_T)$ and $v_0$ sufficiently large, the collision map $f$ in adiabatic coordinates $(\theta,I)$ as defined in \eqref{eq:adiacoordef} is given by 
\begin{equation}\label{eq:adiacoor}
     \theta_1 = \theta_0 + \frac{1}{I_0}, \; I_1 = I_0.
\end{equation}
\end{lemma}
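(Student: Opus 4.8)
The plan is to verify \eqref{eq:adiacoor} by direct computation starting from the exact collision equations \eqref{eq:collisoneqs}, exploiting the fact that on the complement of the singularity strips the particle travels between two points of the same linear branch of $l(t)$, so no approximation is actually needed --- the ``adiabatic'' guesses $I$ and $\theta$ turn out to be \emph{exact} invariants/conjugacies once the correct constants are inserted. First I would fix a collision $(t_0,v_0)$ with $v_0$ large and $(t_0,v_0)\notin f^{-1}(R_0)\cup f^{-1}(R_T)$; the latter condition guarantees that both $t_0$ and the next collision time $t_1$ lie strictly inside the same interval $[0,T)$ or $[T,2T)$ (modulo the period $2T$), so that on $[t_0,t_1]$ we have $l(t)=c-kt+d$ affine with $\dot l\equiv \pm k$ constant. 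I would then solve the first equation of \eqref{eq:collisoneqs}, $v_0(t_1-t_0)=l(t_0)+l(t_1)$, which with $l$ affine becomes a linear equation in $t_1-t_0$; solving gives $t_1-t_0 = \frac{2l(t_0)}{v_0+\dot l}$ (taking the physically relevant root), and correspondingly $l(t_1)=l(t_0)\frac{v_0-\dot l}{v_0+\dot l}$.

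Next I would compute $I_1 = \mathcal{T}(l(t_1)v_1 + l(t_1)\dot l(t_1))$. Since $\dot l$ is the same constant on the whole branch, $\dot l(t_1)=\dot l(t_0)=:\dot l$, and from \eqref{eq:collisoneqs} $v_1 = v_0 - 2\dot l$. Substituting $l(t_1)=l(t_0)\frac{v_0-\dot l}{v_0+\dot l}$ and $v_1+\dot l = v_0-\dot l$ gives
\[
 I_1 = \mathcal{T}\, l(t_1)(v_1+\dot l) = \mathcal{T}\, l(t_0)\frac{v_0-\dot l}{v_0+\dot l}(v_0-\dot l),
\]
which is \emph{not} literally $I_0=\mathcal{T} l(t_0)(v_0+\dot l)$ --- so here I expect a discrepancy of order $O(1/v_0)$, and this is where the ``sufficiently large $v_0$'' hypothesis and the genuine adiabatic (rather than exact) nature of the statement enter. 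I would therefore track the expansion $I_1 = I_0 + O(v_0^{-1})$ carefully, or more likely the intended reading is that \eqref{eq:adiacoor} holds up to the same order of error that is implicit in \eqref{eq:collisoneqs} itself (note \eqref{eq:collisoneqs} already drops a term: the exact first equation should carry a correction because the wall moves during flight, so \eqref{eq:collisoneqs} is the large-$v_0$ normal form, and \eqref{eq:adiacoor} inherits the same asymptotic meaning). Under that reading the computation above already yields $I_1 = I_0(1+O(v_0^{-1})) = I_0$ to the relevant order.

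For the $\theta$ component I would compute $\theta_1-\theta_0 = \frac{1}{2\mathcal{T}}\int_{t_0}^{t_1} l(s)^{-2}\,ds$. With $l$ affine and $\dot l$ constant on $[t_0,t_1]$, the antiderivative of $l^{-2}$ is $\frac{1}{\dot l\, l}$, so
\[
 \theta_1-\theta_0 = \frac{1}{2\mathcal{T}}\cdot\frac{1}{\dot l}\left(\frac{1}{l(t_1)}-\frac{1}{l(t_0)}\right) = \frac{1}{2\mathcal{T}\,\dot l\, l(t_0)}\cdot\frac{l(t_0)-l(t_1)}{l(t_1)}.
\]
Plugging in $l(t_0)-l(t_1) = l(t_0)\frac{2\dot l}{v_0+\dot l}$ and $l(t_1)=l(t_0)\frac{v_0-\dot l}{v_0+\dot l}$, the $\dot l$'s and one factor of $l(t_0)$ cancel and I get $\theta_1-\theta_0 = \frac{1}{\mathcal{T}\,l(t_0)(v_0-\dot l)}$, which equals $\frac{1}{I_0}$ up to the same $O(v_0^{-1})$ correction (replacing $v_0-\dot l$ by $v_0+\dot l$ inside $I_0 = \mathcal{T} l(t_0)(v_0+\dot l)$). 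So both identities reduce to the same elementary algebra plus a uniform large-velocity expansion.

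The main obstacle I anticipate is bookkeeping the branch structure and the error terms consistently: one must (i) confirm that excluding $f^{-1}(R_0)\cup f^{-1}(R_T)$ really does keep $t_0$ and $t_1$ on a common affine branch --- this is essentially the definition of the singularity strips $R_0,R_T$ as the sections of first collisions after the corners $t=0,T$, so a collision not mapped into them by $f$ cannot straddle a corner --- and (ii) state precisely in what asymptotic sense \eqref{eq:adiacoor} holds, matching the order of approximation already built into \eqref{eq:collisoneqs}. The algebra itself is routine; the care is entirely in the hypotheses and the meaning of ``$=$''.
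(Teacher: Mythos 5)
Your overall strategy is the same as the paper's (direct computation from \eqref{eq:collisoneqs} on a single affine branch, with the exclusion of $f^{-1}(R_0)\cup f^{-1}(R_T)$ guaranteeing that $t_0,t_1$ do not straddle a corner), but there is a concrete error that derails the conclusion. Solving $v_0(t_1-t_0)=l(t_0)+l(t_1)$ with $l(t_1)=l(t_0)+\dot l\,(t_1-t_0)$ gives $(v_0-\dot l)(t_1-t_0)=2l(t_0)$, i.e.
\[
 t_1-t_0=\frac{2l(t_0)}{v_0-\dot l},\qquad l(t_1)=l(t_0)\,\frac{v_0+\dot l}{v_0-\dot l},
\]
not the expressions with $v_0+\dot l$ in the denominator that you wrote. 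With the correct root, your own computation closes \emph{exactly}: $I_1=\mathcal{T}\,l(t_1)(v_1+\dot l)=\mathcal{T}\,l(t_0)\frac{v_0+\dot l}{v_0-\dot l}(v_0-\dot l)=\mathcal{T}\,l(t_0)(v_0+\dot l)=I_0$, and $\theta_1-\theta_0=\frac{1}{2\mathcal{T}}\frac{t_1-t_0}{l(t_0)l(t_1)}=\frac{1}{\mathcal{T}\,l(t_0)(v_0+\dot l)}=\frac{1}{I_0}$, with no remainder. The $O(v_0^{-1})$ discrepancy you found is an artifact of the sign error, not a feature of the statement.

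Consequently your reinterpretation of the lemma as holding only ``up to the order implicit in \eqref{eq:collisoneqs}'' is unfounded, and this is the genuine gap: \eqref{eq:collisoneqs} is itself exact whenever the particle reaches the fixed wall between the two collisions, since the total flight distance is precisely $l(t_0)+l(t_1)$ and the reflection law $v_1=v_0-2\dot l(t_1)$ is the exact elastic law off a wall moving with velocity $\dot l(t_1)$; the hypothesis that $v_0$ be sufficiently large only rules out re-collisions (cf. Remark \ref{rmk:onlyhighenergy}), it does not signal an asymptotic normal form. The paper's proof is exactly your computation done with the correct root (equivalently, eliminating $t_1-t_0$ via the collision equation rather than solving for it), yielding $I_1=I_0$ and $\theta_1-\theta_0=1/I_0$ as identities. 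Fix the sign, and your argument reproduces the paper's proof verbatim; as written, it proves a weaker asymptotic statement than the lemma asserts, and does so from a false premise about \eqref{eq:collisoneqs}.
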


\begin{proof}
We present the proof of the case when $t_0,t_1\in(0,T)$; the other case when $t_0,t_1\in(T,2T)$ can be obtained in a similar fashion.\\

First in the momentum direction 
\begin{align*}
    \mathcal{T}^{-1} (I_1 - I_0) 
    &= (l_1 v_1 + l_1 \dot l_1) - (l_0 v_0 + l_0 \dot l_0) \\
    &= (l_1 v_1 - l_0 v_0) + (l_1 \dot l_1 - l_0 \dot l_0) \\
    &= ((l_0 - k(t_1 - t_0))(v_0 +2k) - l_0 v_0) - k((l_0 - k(t_1 - t_0)) - l_0) \\
    &= 2k l_0 - k v_0 (t_1 - t_0) - k^2 (t_1 - t_0) \\
    &= k ( 2 l_0 - k (t_1 - t_0) - v_0 (t_1 - t_0) )\\
    &= 0,
\end{align*}
where we have used the fact that 
\begin{equation*}
    \begin{cases}
        l_1 = l_0 + \dot l_0 (t_1 - t_0) \\
        v_1 = v_0 - 2 \dot l_0
    \end{cases}
\end{equation*}

Next in the time direction 
\begin{equation*}
    2\mathcal{T}^{-1} (\theta_1 - \theta_0) = \int_{t_0}^{t_1} \dfrac{ds}{l(s)^2} = \dfrac{t_1-t_0}{l_0 l_1} \; .
\end{equation*}
And meanwhile
\begin{align*}
    \dfrac{l_0 l_1}{t_1-t_0} 
    &= \dfrac{l_0 v_0 l_1}{l_0 + l_1} \\
    &= \dfrac{l_0 v_0 (l_0 + l_1 - l_0)}{l_0 + l_1} \\
    &= l_0 v_0 - \dfrac{l_0 (l_1 -  \dot l_0 (t_1 - t_0))}{t_1 - t_0} \\
    &= \underbrace{l_0 v_0 + l_0 \dot l_0}_{I_0/\mathcal{T}} - \dfrac{l_0 l_1}{t_1 - t_0} \\
    &= \dfrac{I_0}{\mathcal{T}} - \dfrac{l_0 l_1}{t_1 - t_0},
\end{align*}
which implies that 
\[
\theta_1 - \theta_0 = \frac{1}{I_0} \; .
\]
\end{proof}

%-------------------------------------------------------------
\subsubsection{The Adiabatic Normal Forms}

Here, we present the adiabatic normal forms (c.f. Proposition \ref{prop:adianorm}), defined on the Poincar\'e sections $R_0,R_T$, which capture the dramatic momentum change of the particle in roughly one period $\Delta t=2T$, due to the the singularities in the derivative of wall motions.\\

Starting with a high initial velocity, the particle experiences numerous collisions in a period. However, the adiabatic coordinates from Lemma \ref{lemma:adiacoord} makes it possible to ``sum up" these collisions until the particle foresees a singularity ahead of the next collision. We hereby perform another change of coordinates near the singularities and then complete the puzzle in the next proposition.

We define the ``new time" variable $\tau$ as follows: 
\[ \tau \buildrel \pmod 1 \over{:=} 
   \begin{cases}
       I\theta &\hbox{in $R_0$}\\
       I(\theta - \theta_T) &\hbox{in $R_T$}
   \end{cases}
\]
where $\theta_T=\displaystyle \dfrac{1}{2\mathcal{T}}\int_0^T \dfrac{ds}{l(s)^2} = \dfrac{1}{2}$.

\begin{prop}[Adiabatic normal forms]\label{prop:adianorm}
For all initial conditions $(\tau,I)$ with sufficiently large initial momentum $I$, the Poincar\'e maps of the particle are given by the following two adiabatic normal forms $P_1: R_0 \to R_T$ and $P_2: R_T \to R_0$ 
\begin{equation}\label{eq:adianorm1}
    P_1:=\begin{cases}
    \bar\tau=\tau-\dfrac{I}{2} \; \pmod 1\\
    \bar I=I+2Ak\mathcal{T}(2\bar\tau-1)
\end{cases},
\end{equation}

\begin{equation}\label{eq:adianorm2}
    P_2:=\begin{cases}
    \bar\tau=\tau-\dfrac{I}{2}\; \pmod 1\\
    \bar I=I-2Bk\mathcal{T}(2\bar\tau-1)
\end{cases}.
\end{equation}
\end{prop}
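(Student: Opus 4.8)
The plan is to build $P_1$ by concatenating a long run of regular collisions, to each of which Lemma~\ref{lemma:adiacoord} applies verbatim, with the single collision that straddles the kink of $l$ at $t=T$; the map $P_2$ then follows from the symmetry exchanging the two walls ($A\leftrightarrow B$, the kink at $t=T$ replaced by the one at $t=2T\equiv0$, and the velocity jump $\dot l:-k\to+k$ replaced by $\dot l:+k\to-k$). The identities \eqref{eq:adianorm1}--\eqref{eq:adianorm2} are to be read as the leading-order adiabatic normal forms, i.e.\ as holding with remainders that vanish as $I\to\infty$, uniformly once one stays away from the orbits hitting $S_0\cup S_T$.

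First I would sum the regular collisions. Given $(\tau,I)\in R_0$, write $\theta_0=\tau/I\in(0,1/I)$ for the adiabatic time of this first collision after $t=0$. Since $I=\mathcal T l(t)(v+\dot l(t))$ and $0<l\le B$, conservation of $I$ along regular collisions keeps $v$ of order $I$, so the large-velocity hypothesis of Lemma~\ref{lemma:adiacoord} persists. A short induction, using that a collision with $t_{n-1}<T$ is either regular (and then $t_n<T$, $\theta_n=\theta_{n-1}+1/I$, $I_n=I$) or is the one straddling $t=T$, shows that the first $N:=\lfloor I/2-\tau\rfloor$ collisions are regular with $\theta_n=(\tau+n)/I$; moreover collision $N+1$ cannot be regular, for otherwise Lemma~\ref{lemma:adiacoord} would give $\theta_{N+1}=(\tau+N+1)/I>\theta_T=\tfrac12$, forcing $t_{N+1}>T$, contrary to regularity. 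Hence $t_N<T<t_{N+1}$, so $(t_N,v_N)\in f^{-1}(R_T)$ and $(t_{N+1},v_{N+1})=f(t_N,v_N)\in R_T$ is the point whose coordinates $(\bar\tau,\bar I)$ must be computed.

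Next, the blow-up at the kink. Put $\epsilon=T-t_N>0$ and $\delta=t_{N+1}-T>0$, both $O(1/I)$. Using $kT=B-A$ gives $l(t_N)=A+k\epsilon$, $l(t_{N+1})=A+k\delta$, $\dot l(t_N)=-k$, $\dot l(t_{N+1})=+k$, so \eqref{eq:collisoneqs} yields $v_{N+1}=v_N-2k$ and $(v_N-k)(\epsilon+\delta)=2A$, while $I=\mathcal T(A+k\epsilon)(v_N-k)$; together these give the exact relation $\epsilon+\delta=2A\mathcal T(A+k\epsilon)/I$. Integrating $l^{-2}$ over $[t_N,T]$ and over $[T,t_{N+1}]$, on each of which $l$ is linear, gives the exact relations
\[
\theta_T-\theta_N=\frac{\epsilon}{2\mathcal T A(A+k\epsilon)},\qquad \bar\theta-\theta_T=\frac{\delta}{2\mathcal T A(A+k\delta)}.
\]
Since $\theta_T-\theta_N=\tfrac12-(\tau+N)/I=\sigma/I$ with $\sigma:=\{I/2-\tau\}\in(0,1)$ the fractional part, the first relation forces $\epsilon=2\mathcal T A^2\sigma/I+O(I^{-2})$, hence $\delta=2\mathcal T A^2(1-\sigma)/I+O(I^{-2})$. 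Then $\bar\tau=\bar I(\bar\theta-\theta_T)=1-\sigma+O(I^{-1})\equiv\tau-I/2\pmod1$, the first line of \eqref{eq:adianorm1}, and $2\bar\tau-1=1-2\sigma$ to leading order. For the momentum, $v_{N+1}=v_N-2k$ makes the factor $v_N-k$ common to $I=\mathcal T(A+k\epsilon)(v_N-k)$ and $\bar I=\mathcal T(A+k\delta)(v_{N+1}+k)=\mathcal T(A+k\delta)(v_N-k)$, so $\bar I=I\,(A+k\delta)/(A+k\epsilon)=I+Ik(\delta-\epsilon)/(A+k\epsilon)$; inserting $\delta-\epsilon=2\mathcal T A^2(1-2\sigma)/I+O(I^{-2})$ gives $\bar I=I+2Ak\mathcal T(1-2\sigma)+O(I^{-1})=I+2Ak\mathcal T(2\bar\tau-1)+O(I^{-1})$, the second line of \eqref{eq:adianorm1}. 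Running the same computation at the kink $t=2T\equiv0$, where $l=B$, $\dot l$ jumps from $+k$ to $-k$ (so $v_{N+1}=v_N+2k$ and the common factor is $v_N+k$), interchanges $A$ and $B$ and reverses the sign of the momentum increment, producing \eqref{eq:adianorm2}.

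The induction and the Taylor expansions above are routine; the real obstacle is uniformity. The expansions for $\epsilon$ and $\delta$, and hence the $O(I^{-1})$ remainders in the normal forms, degrade exactly when the last regular collision falls within $O(1/I)$ of the kink, i.e.\ when $\sigma$ lies within $O(1/I)$ of $0$ or $1$; in that boundary layer even the identification of $N$ with $\lfloor I/2-\tau\rfloor$ must be revisited, since a collision may straddle $t=T$ one step earlier than expected. One must therefore make the estimate uniform in $\tau$ off an $O(1/I)$-neighbourhood of those values — the orbits landing exactly on $S_0\cup S_T$ being precisely the excluded null set — and verify that all the implied $O(\cdot)$ constants are independent of $\tau$. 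This boundary-layer analysis near the singular sections is the crux of the argument.
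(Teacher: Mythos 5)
Your skeleton (sum the regular collisions with Lemma~\ref{lemma:adiacoord}, then analyze the single collision straddling the kink, then get $P_2$ by symmetry) is the same as the paper's, but there is a genuine gap: you only establish the normal forms \emph{to leading order}, with uncontrolled $O(I^{-1})$ remainders, whereas the proposition is an exact identity for all sufficiently large $I$ — and it must be exact, because everything downstream (the exact invariance of $\mathcal{C}_D$ in Lemma~\ref{lemma:invcirc}, the skew product with integer fibre jumps in Proposition~\ref{prop:skewprod}, and hence the recurrence Theorem~\ref{thm:rec}) uses $P_1,P_2$ as genuinely piecewise affine maps; with $O(I^{-1})$ errors the circles $\mathcal{C}_D$ are no longer invariant and that machinery collapses. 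The exactness is in fact already contained in the relations you wrote down, with no Taylor expansion needed: from $(v_N-k)(\epsilon+\delta)=2A$, $I=\mathcal{T}(A+k\epsilon)(v_N-k)$, $\bar I=\mathcal{T}(A+k\delta)(v_N-k)$ and your two integral identities one gets $\bar\tau=\bar I(\bar\theta-\theta_T)=\frac{(v_N-k)\delta}{2A}$ and $I(\theta_T-\theta_N)=\frac{(v_N-k)\epsilon}{2A}=1-\bar\tau$ exactly, hence
\begin{equation*}
\bar\tau \;=\; 1-\Bigl(\tfrac{I}{2}-\tau-N\Bigr)\;\equiv\;\tau-\tfrac{I}{2}\pmod 1,
\qquad
\bar I-I \;=\; k\mathcal{T}(v_N-k)(\delta-\epsilon)\;=\;2Ak\mathcal{T}\,(2\bar\tau-1),
\end{equation*}
with no error terms; this is precisely the paper's computation, carried out there in the variables $(t_n,v_n)$ instead of $(\epsilon,\delta)$.

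Because of this, the ``boundary layer'' you identify as the crux is an artifact of the asymptotic route, and your proposal leaves it unresolved rather than proving it: in the exact computation nothing degrades as $\sigma\to 0$ or $1$, your identification $N=\lfloor I/2-\tau\rfloor$ holds exactly whenever $I/2-\tau\notin\mathbb{Z}$ by the very argument you give, and the only exclusions are the orbits hitting $S_0\cup S_T$ (the null set where $I/2-\tau\in\mathbb{Z}$, or $\bar\tau$ integral for $P_2$) together with the large-$I$ requirement that rules out re-collisions and guarantees at least one collision on each linear piece. So the fix is not a uniform remainder estimate but simply replacing your expansions of $\epsilon,\delta$ by the exact identities above; as written, the proposal proves a strictly weaker statement than the proposition requires.
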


\begin{proof}
We present the proof for the Poincar\'e map $P_1$; the proof for $P_2$ can be done in a similar fashion. 

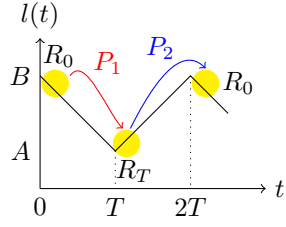
\begin{figure}[!ht]
    \centering
    \begin{tikzpicture}
        \filldraw[yellow] (0.2,1.4) circle (5pt);
        \filldraw[yellow] (1.15,0.6) circle (5pt);
        \filldraw[yellow] (2.2,1.4) circle (5pt);
        \draw[->] (0,0)--(3,0) node[anchor=west]{$t$};
        \draw[->] (0,0)--(0,2) node[anchor=south]{$l(t)$};
        \draw (0,1.5)--(1,0.5)--(2,1.5)--(2.5,1);
        \draw[dotted] (1,0.5)--(1,0) (2,1.5)--(2,0);
        \draw[->,red] (0.4,1.5)..controls (0.6,1.8) and (0.9,1)..(1.1,0.8);
        \draw[->,blue] (1.2,0.8)..controls (1.7,1.8) and (2,1.8)..(2.2,1.6);
        \draw[red] (0.9,1.4) node [anchor=south]{$P_1$};
        \draw[blue] (1.6,1.6) node [anchor=south]{$P_2$};
        \draw (0,1.5) node[anchor=east]{$B$}
              (0,0.5) node[anchor=east]{$A$}
              (0,0) node[anchor=north]{0}
              (1,0) node[anchor=north]{$T$}
              (2,0) node[anchor=north]{$2T$}
              (0.25,1.5) node[anchor=south]{$R_0$}
              (1.25,0.5) node[anchor=north]{$R_T$}
              (2.3,1.4) node[anchor=west]{$R_0$};
    \end{tikzpicture}
    \caption{The Poincar\'e sections and the adiabatic normal forms}
    \label{fig:adianorm}
\end{figure}

Let $(\theta_0,I_0)\in R_0$. We denote for ease $\tau=\theta_0 I_0, I=I_0$ and $(\bar{\tau},\bar{I}) = P_1 (\tau,I)$. 

By Lemma \ref{lemma:adiacoord}, $(\theta_n,I_n)\in f^{-1} R_0$ with $n=[I_0 (\theta_T - \theta_0)]$. Then $\bar{\tau} = I_{n+1}(\theta_{n+1}-\theta_T), \bar{I}=I_{n+1}$. 

Our goal is to derive formulas of $\bar{\tau},\bar{I}$ in terms of $\tau,I$. 

For $t_{n+1}\in(T,2T)$, $l(t_{n+1})=A+k(t_{n+1}-T), v_{n+1}=v_n -2k$, so we have 
\begin{equation}\label{eq:bartauI}
    \begin{cases}
        \bar{\tau} = \dfrac{I_{n+1}}{2\mathcal{T}} \displaystyle\int_0^1 \dfrac{ds}{l(s)^2} = \dfrac{I_{n+1}}{2\mathcal{T}} \dfrac{t_{n+1}-T}{A l_{n+1}}\\
        \bar{I} = \mathcal{T}(l_{n+1} v_{n+1} + l_{n+1} \dot l_{n+1}) = \mathcal{T} l_{n+1} (v_{n+1} +k) = \mathcal{T} l_{n+1} (v_n -k)
    \end{cases},
\end{equation}
which implies that 
\begin{equation}\label{eq:taubar1}
     \bar{\tau} = \dfrac{\mathcal{T} l_{n+1} (v_n +k)}{2\mathcal{T}} \dfrac{t_{n+1}-T}{A l_{n+1}} = \dfrac{1}{2A} (v_n -k) (t_{n+1}-T),
\end{equation}
which then suggests that we arrange computation in terms of $(v_n - k)$ and $(t_{n+1}-T)$.\\ 

We start with the formula for the momentum change. 

We recall that 
\begin{equation}\label{eq:In}
    I_n = \mathcal{T} (l_n v_n + l_n \dot l_n) = \mathcal{T} l_n (v_n -k).
\end{equation}
Then 
\begin{align*}
    I_{n+1} 
    &= \mathcal{T} l_{n+1} (v_n -k) \\
    &= \mathcal{T} (l_{n+1} -A + A - l_n + l_n ) (v_n -k) \\
    &= \mathcal{T} (l_{n+1} -A) (v_n -k) + \mathcal{T} (A - l_n ) (v_n -k) + \underbrace{\mathcal{T} l_n  (v_n -k)}_{I_n \hbox{ by \eqref{eq:In}}} \\
    &=  I_n + \mathcal{T} k \underbrace{(t_{n+1} -T) (v_n -k)}_{2A\bar{\tau} \hbox{ by \eqref{eq:taubar1}}} + \mathcal{T} (A - l_n ) (v_n -k) \\
    &= I_n + 2Ak\mathcal{T}\bar{\tau} - k\mathcal{T}(T-t_n)(v_n -k) .
\end{align*}
Now let us simplify $(T - t_n ) (v_n -k)$. 
\begin{align}
    v_n(t_{n+1}-t_n) &= l_n + l_{n+1} \nonumber\\
    \implies v_n (t_{n+1}-T + T - t_n) &= (l_{n+1} - A) - (A-l_n) + 2A \nonumber\\
    \implies \underbrace{(v_n - k)(t_{n+1}-T)}_{2A\bar{\tau} \hbox{ by \eqref{eq:taubar1}}} &= (k-v_n)(T - t_n) + 2A \nonumber\\
    \implies (v_n -k)(T - t_n) &= -2A(\bar{\tau} -1). \label{eq:vntnT}
\end{align}
Hence we continue 
\begin{align*}
    I_{n+1} 
    &= I_n + 2Ak\mathcal{T}\bar{\tau} + 2Ak\mathcal{T}(\bar\tau -1)\\
    &= I_n + 2Ak\mathcal{T} (2\bar\tau -1) 
\end{align*}
which implies, together with $I_n = I_0$ by Lemma \ref{lemma:adiacoord}, that 
\[
\bar{I} = I + 2Ak\mathcal{T} (2\bar\tau -1) \; .
\]

\vspace{5pt}

Next we derive the formula for the time change. 

We note that in the $n^{th}$ collision right before the singularity at $t=T$ 
\begin{align*}
    I_n \theta_n 
    &= I_n (\theta_T - (\theta_T - \theta_n)) \\
    &= I_n \theta_T - \mathcal{T} (l_n v_n - k l_n) \dfrac{1}{2\mathcal{T}} \int_{t_n}^T \dfrac{ds}{l(s)^2} \\
    &= \dfrac{1}{2} I_0 - \dfrac{1}{2} l_n (v_n -k)\dfrac{T-t_n}{A l_n} \\
    &= \dfrac{1}{2} I_0 - \dfrac{1}{2A} (v_n -k) (T-t_n).
\end{align*}
Then by \eqref{eq:vntnT} we have 
\begin{align*}
    \bar\tau 
    &= 1- \dfrac{1}{2A} (v_n -k) (T-t_n) \\
    &= 1 + I_n\theta_n - \dfrac{1}{2} I_0 \\
    &= 1 + I_0 \left(\theta_0 + \dfrac{n}{I_0}\right) - \dfrac{1}{2} I_0 \\
    &= I_0 \theta_0 - \dfrac{1}{2} I_0 + n+1 \\
    &\buildrel \pmod{1} \over = \tau - \dfrac{1}{2} I_0 \; .
\end{align*}
\end{proof}

\begin{rmk}\label{rmk:onlyhighenergy}
    We emphasize that the adiabatic coordinates and normal forms are only valid for large energies. In fact, the first equation in \eqref{eq:collisoneqs} is not true if $v_0$ is so small that a \emph{re-collision} occurs, i.e. the moving wall chases and collides the particle before the particle collides at the fixed wall.
\end{rmk}

%-------------------------------------------------------------
\subsection{Skew Product Structure under Resonance}\label{ssec:skew}

In this subsection we show that the complete Poincar\'e map $P:=P_2 \circ P_1$ possesses invariant circles. If furthermore the parameters $A,B$ satisfy resonance conditions, then the parabolic dynamics on the invariant circles can be decomposed as a skew product over an interval exchange map with integer change on the fibers. 

%-------------------------------------------------------------
\subsubsection{Invariant Circles}
Since $P_1,P_2$ from Proposition \ref{prop:adianorm} are (piecewise) affine maps with constant coefficients, the eigendirection of the complete Poincar\'e map $P=P_2 \circ P_1$ is always preserved. Now we verify that the translations in the affine maps are compatible with the dynamics so that we obtain invariant circles of $P$ on the $(\tau,I)$-phase cylinder.\\

We note that 
\[ DP_1 = 
\begin{pmatrix}
    1 & -\dfrac{1}{2} \\
    4Ak\mathcal{T} & 1-2Ak\mathcal{T}
\end{pmatrix}, \;
DP_2 = \begin{pmatrix}
    1 & -\dfrac{1}{2} \\
    -4Bk\mathcal{T} & 1+2Bk\mathcal{T}
\end{pmatrix}
\]
and consequently 
\[ DP = DP_2 \cdot DP_1 = 
\begin{pmatrix}
    1-2Ak\mathcal{T} & Ak\mathcal{T}-1\\
    4(B-A)k\mathcal{T} & 1+2Ak\mathcal{T}
\end{pmatrix}
\]
where we have used the fact that $\mathcal{T}=\dfrac{T}{AB}$ and $k=\dfrac{B-A}{T}$.

We further note that $\det(DP)=1$, and $DP$ has an eigenvalue $\lambda=1$ with an eigenvector $\mathbf{v}=(A, 2(A-B))$. We propose the following candidate as the family $\{\mathcal{C}_D\}_{D\in [0,1)}$ of invariant circles of the complete Poincar\'e map $P$:
\begin{equation}\label{eq:invcirc}
    \mathcal{C}_D := \left\{(\tau,I)\, \big| \, \tau + \dfrac{A}{2(B-A)} = D \pmod1 \right\}.
\end{equation}

We claim that $\mathcal{C}_D$ is invariant under the dynamics $P$.

\begin{lemma}[Invariant circles]\label{lemma:invcirc}
    For any $D\in[0,1)$, $\mathcal{C}_D$ as defined in \eqref{eq:invcirc} is invariant by the map $P$.
\end{lemma}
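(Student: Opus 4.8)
The plan is to set $\beta:=\dfrac{A}{2(B-A)}$ and to read $\mathcal C_D$ as the level set modulo $1$ of the linear functional dual to the eigendirection $\mathbf v=(A,2(A-B))$, namely of $\phi(\tau,I):=\tau+\beta I$ (note $\phi(\mathbf v)=0$). Because $DP\,\mathbf v=\mathbf v$, the foliation $\{\mathcal C_D\}_{D\in[0,1)}$ is automatically preserved by the linear part $DP$, so the only thing the lemma really asserts is that the affine translations together with the $\pmod 1$ truncations built into $P_1,P_2$ shift $\phi$ by an integer and not merely by a real number, i.e.
\[
\phi\circ P\;\equiv\;\phi\pmod 1
\]
on the large-$I$ region where $P=P_2\circ P_1$ is defined (Remark \ref{rmk:onlyhighenergy}). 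Granting this, $P(\mathcal C_D)\subseteq\mathcal C_D$; equality then follows by the same computation applied to $P^{-1}$ (equivalently, $P$ is a bijection of that region, being piecewise affine with $\det DP=1$ and invertible branches).

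I would verify the congruence symmetrically, through an auxiliary functional $\psi(\tau,I):=-\tau+\gamma I$ with $\gamma:=\dfrac{B}{2(B-A)}=\beta+\tfrac12$, by proving $\phi\circ P_2\equiv\psi\pmod 1$ and $\psi\circ P_1\equiv\phi\pmod 1$ and composing. For the first, write $(\tau',I')=P_2(\tau,I)$ from \eqref{eq:adianorm2}, plug $I'=I-2Bk\mathcal T(2\tau'-1)$ into $\phi(\tau',I')=\tau'+\beta I'$, and collect terms: the coefficient of $\tau'$ becomes $1-4\beta Bk\mathcal T$ and the constant becomes $2\beta Bk\mathcal T$. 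For the second, write $(\bar\tau,\bar I)=P_1(\tau,I)$ from \eqref{eq:adianorm1}, plug $\bar I=I+2Ak\mathcal T(2\bar\tau-1)$ into $\psi(\bar\tau,\bar I)$, and collect: the coefficient of $\bar\tau$ becomes $4\gamma Ak\mathcal T-1$. Both computations close because of the two elementary identities
\[
Ak\mathcal T=\frac{B-A}{B},\qquad Bk\mathcal T=\frac{B-A}{A}
\]
(immediate from $k=\tfrac{B-A}{T}$ and $\mathcal T=\tfrac{T}{AB}$), which give $4\beta Bk\mathcal T=4\gamma Ak\mathcal T=2$ and $2\beta Bk\mathcal T=2\gamma Ak\mathcal T=1$; one is then left with $\phi(\tau',I')\equiv-\tau'+\beta I\equiv -\tau+\tfrac12 I+\beta I=\psi(\tau,I)$ and $\psi(\bar\tau,\bar I)\equiv\bar\tau+\gamma I\equiv \tau-\tfrac12 I+\gamma I=\phi(\tau,I)$ modulo $1$, using the $\tau$-components $\tau'\equiv\tau-\tfrac12 I$ and $\bar\tau\equiv\tau-\tfrac12 I$ of $P_2$ and $P_1$.

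The step I expect to need care, and would spell out rather than hide, is the validity of the $\pmod 1$ manipulations: in \eqref{eq:adianorm1}–\eqref{eq:adianorm2} the factor $2\bar\tau-1$ (resp.\ $2\tau'-1$) uses the $[0,1)$-representative whereas $\bar I$ (resp.\ $I'$) is a genuine real number, so a residual $\bar\tau$ (resp.\ $\tau'$) remains after substitution; replacing it by $\tau-\tfrac12 I$ inside a mod-$1$ identity is legitimate \emph{only} because its coefficient $4\gamma Ak\mathcal T-1$ (resp.\ $1-4\beta Bk\mathcal T$) equals the integer $1$ (resp.\ $-1$). In other words, the slope $\beta$ in the definition of $\mathcal C_D$ is forced precisely by the requirement that these residual coefficients be integers, and the two displayed identities are exactly what delivers that; everything else is routine substitution.
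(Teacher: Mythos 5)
Your proof is correct, and it reaches the same conclusion from the same raw ingredients as the paper --- the normal forms \eqref{eq:adianorm1}--\eqref{eq:adianorm2} and the identities $Ak\mathcal{T}=(B-A)/B$, $Bk\mathcal{T}=(B-A)/A$ --- but it is organized differently. The paper composes $P_2\circ P_1$ explicitly on $\mathcal{C}_D$ and verifies $\bar{\bar\tau}+\tfrac{A}{2(B-A)}\bar{\bar I}\equiv D \pmod 1$ in a single computation, which forces it to drag nested mod-$1$ representatives through the algebra and to check at the end that their coefficients ($2$ and $-2$ there) are integers. You instead factor the verification through the auxiliary functional $\psi(\tau,I)=-\tau+\tfrac{B}{2(B-A)}I$, proving $\psi\circ P_1\equiv\phi$ and $\phi\circ P_2\equiv\psi \pmod 1$ and composing; in effect $P_1$ carries the $\phi$-circles to the $\psi$-circles and $P_2$ carries them back. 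Your two half-step congruences check out (e.g.\ $\psi(\bar\tau,\bar I)=\bar\tau+\gamma I-1$ with residual coefficient $4\gamma Ak\mathcal{T}-1=1$, and $\phi(\tau',I')=-\tau'+\beta I+1$ with residual coefficient $1-4\beta Bk\mathcal{T}=-1$), and you correctly isolate the one delicate point --- that replacing the truncated representative by $\tau-\tfrac12 I$ inside a mod-$1$ identity is legitimate only because these residual coefficients are integers --- which the paper handles implicitly inside its ``$=_1$'' computation. What your route buys is a shorter, more symmetric calculation with the mod-$1$ bookkeeping justified once per half-map and a conceptual explanation of why the slope $\tfrac{A}{2(B-A)}$ in \eqref{eq:invcirc} is forced; what the paper's route buys is that no intermediate foliation needs to be introduced. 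One small caveat: the lemma asserts only forward invariance $P(\mathcal{C}_D)\subseteq\mathcal{C}_D$ on the high-energy region where $P$ is defined, which both arguments deliver; your closing remark about equality via $P^{-1}$ is not needed, and the claim that $P$ is a bijection of that region would itself require care near the region's boundary --- treat it as an aside, not as part of the proof.
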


\begin{proof}
    Fix $D\in(0,1)$. Suppose $(\tau,I)\in\mathcal{C}_D$, i.e. $\tau + \dfrac{A}{2(B-A)} I = D \pmod1$. We denote $(\bar{\tau},\bar{I}) = P_1 (\tau,I)$ and $(\bar{\bar{\tau}},\bar{\bar{I}}) = P_2 (\bar\tau,\bar{I})$. 

    By Proposition \ref{prop:adianorm}, we have 
    \begin{align*}
        \bar\tau &= \tau - \dfrac{I}{2} \mod1\\
        \bar I &= I + 2Ak\mathcal{T}\left(2\left(\tau - \dfrac{I}{2} \mod1 \right) -1 \right)\\
        &= I + 4Ak\mathcal{T}\left(\tau - \dfrac{I}{2} \mod1 \right) - 2Ak\mathcal{T}
    \end{align*}
    and then 
    \begin{align*}
        \bar{\bar{\tau}} &= \bar\tau - \dfrac{\bar I}{2} \mod1\\
        &= \left(\tau - \dfrac{I}{2} \mod1 \right) - \dfrac{I}{2} - 2Ak\mathcal{T} \left(\tau - \dfrac{I}{2} \mod1 \right) + Ak\mathcal{T} \mod1\\
        &= \tau - I - 2Ak\mathcal{T} \left(\tau - \dfrac{I}{2} \mod1 \right) + Ak\mathcal{T} \mod1 \\
        \bar{\bar{I}} &= \bar{I} - 2Bk\mathcal{T} (2\bar{\bar{\tau}} -1)\\
        &= I + 4Ak\mathcal{T} \left(\tau - \dfrac{I}{2} \mod1 \right) - 2Ak\mathcal{T} + 2Bk\mathcal{T} \\
        &\quad - 4Bk\mathcal{T}\left(\tau -I - 2Ak\mathcal{T} \left(\tau - \dfrac{I}{2} \mod1 \right) + Ak\mathcal{T} \mod1 \right) \; .
    \end{align*}

    We verify that $(\bar{\bar{\tau}},\bar{\bar{I}}) \in \mathcal{C}_D$.
    \begin{align*}
        \bar{\bar{\tau}} + \dfrac{A}{2(B-A)} \bar{\bar{I}} 
        &=_1 \textcolor{red}{\tau} - I - 2Ak\mathcal{T} \left(\tau - \dfrac{I}{2} \mod1 \right) + Ak\mathcal{T} + \textcolor{red}{\dfrac{A}{2(B-A)} I} \\
        &\quad + \underbrace{\dfrac{2A^2 k\mathcal{T}}{B-A}}_{2A/B} \left(\tau - \dfrac{I}{2} \mod1 \right) - \underbrace{\dfrac{A^2 k\mathcal{T}}{B-A}}_{A/B} + \underbrace{\dfrac{AB k\mathcal{T}}{B-A}}_{1} \\
        &\quad -\underbrace{\dfrac{2AB k\mathcal{T}}{B-A}}_{2} \left(\tau -I - 2Ak\mathcal{T} \left(\tau - \dfrac{I}{2} \mod1 \right) + Ak\mathcal{T} \mod1 \right)\\
        &=_1 D + \left(\tau - \dfrac{I}{2} \mod1 \right)\underbrace{\left(-2Ak\mathcal{T} + \dfrac{2A}{B} + 4Ak\mathcal{T}\right)}_{2} \\
        &\quad + I -2\tau + \underbrace{Ak\mathcal{T} - \dfrac{A}{B} - 2Ak\mathcal{T} + 1}_{0} \\
        &=_1 D + 2\tau - I + I - 2\tau \\
        &=_1 D \; .
    \end{align*}
\end{proof}

%-------------------------------------------------------------
\subsubsection{Skew Product Structure}
Now, we show that under resonance condition, the invariant circle $\mathcal{C}_D$ admits a skew product structure and the dynamics on $\mathcal{C}_D$ is equivalent to a finite extension of circle rotations with a piecewise constant cocycle.

\begin{definition}[Classical resonance]\label{def:resonance}
    For $q\in\mathbb{N}$, we say the parameters $(A,B)$ are \emph{in (classical) $q$-resonance} if $(B-A)/A=q$.
\end{definition}

By our assumption $B>A$, $q\ge 1$. We say $(A,B)$ is \emph{in special resonance} if $(B-A)/A=1$. The particular choice of parameters $(A,B,T)=(1/\sqrt{2}, \sqrt{2}, 1)$ made by Ulam \cite{Ulam61} and Zharnitsky \cite{Zha98} corresponds to a special case of the special resonance.\\

We introduce some necessary notations before we state the main proposition of the section. 

\begin{definition}[A/typical circles]\label{def:Dtyp}
    We say an invariant circle $\mathcal{C}_D$ is \emph{typical} if $D \ne m/q$ for any integer $m=0,\cdots,q-1$. Otherwise, we say $\mathcal{C}_D$ is \emph{atypical} if $D = m/q$ for some integer $m=0,\cdots,q-1$.
\end{definition}

We cut an invariant circle $\mathcal{C}_{D}$ into continuity components of $P_1, P_2$ (c.f. Fig.\ref{fig:contcomp}).

Fix an integer $m=0,\cdots,q-1$. 

For a typical circle $\mathcal{C}_D$ with $D\in\left( \dfrac{m}{q}, \dfrac{m+1}{q} \right)$, we define for any $s=0,1,\cdots,q+1$ 
\begin{equation}\label{eq:typbasecomp}
    \mathcal{C}_{D}^{m,s} := \mathcal{C}_{D} \cap \left\{ \dfrac{q}{q+1}D - \dfrac{1}{q+1}(m+1-s) < \tau < \dfrac{q}{q+1}D - \dfrac{1}{q+1}(m-s) \right\}.
\end{equation}

We need to introduce the secondary cut on the starting or ending component, depending on whether $D$ lies in the first or the second half of $\big( m/q, (m+1)/q \big)$. 

For $D\in\left( \dfrac{m}{q}, \dfrac{m+\frac{1}{2}}{q} \right]$, we perform secondary cut on the ending component $\mathcal{C}_{D}^{m,q+1}$ at $\tau^{m,q+1}:= 2-\frac{q}{q+1}(1-2D)-\frac{2}{q+1}(m+1)$
\begin{equation}\label{eq:seccutq}
    \mathcal{C}_{D}^{m,q+1-} := \mathcal{C}_{D}^{m,q+1} \cap \{\tau < \tau^{m,q+1}\}, \; \mathcal{C}_{D}^{m,q+1+} := \mathcal{C}_{D}^{m,q+1} \cap \{\tau > \tau^{m,q+1}\}.
\end{equation}

For $D\in\left( \dfrac{m+\frac{1}{2}}{q} , \dfrac{m+1}{q} \right)$, we perform secondary cut on the starting component $\mathcal{C}_{D}^{m,0}$ at $\tau^{m,0}:= 1-\frac{q}{q+1}(1-2D)-\frac{2}{q+1}(m+1)$
\begin{equation}\label{eq:seccut0}
    \mathcal{C}_{D}^{m,0-} := \mathcal{C}_{D}^{m,0} \cap \{\tau < \tau^{m,0}\}, \; \mathcal{C}_{D}^{m,0+} := \mathcal{C}_{D}^{m,0} \cap \{\tau > \tau^{m,0}\}.
\end{equation}

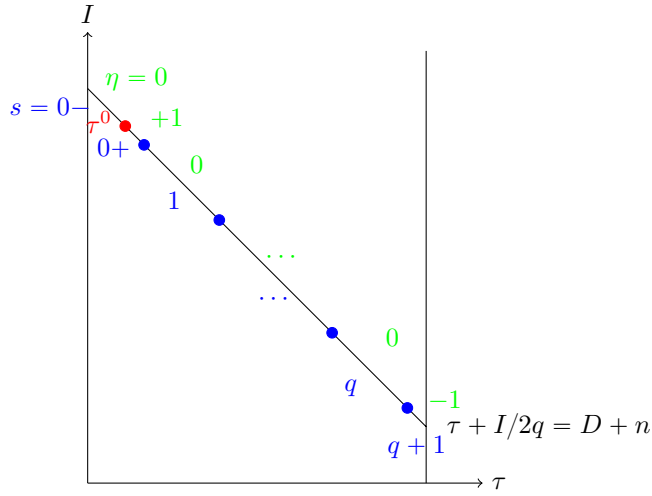
\begin{figure}[!ht]
    \centering
    \begin{tikzpicture}
        \draw[->] (-0.75,-1)--(4.5,-1) node[anchor=west]{$\tau$};
        \draw[->] (-0.75,-1)--(-0.75,5) node[anchor=south]{$I$};
        \draw (3.75,-1)--(3.75,4.75) (-0.75,4.25)--(3.75,-0.25);
        \foreach \x/\y in {0/3.5, 1/2.5, 2.5/1, 3.5/0} {
        \filldraw[fill=blue, draw=blue] (\x,\y) circle (2pt);
        }
        \filldraw[fill=red, draw=red] (-0.25,3.75) circle (2pt);
        \draw[red] (-0.3,3.8) node[anchor=east]{$\tau^0$};
        \foreach \x/\y/\text in {-1.25/4.25/$s=0-$, -0.4/3.7/$0+$, 0.4/3/1, 1.75/1.65/$\cdots$, 2.75/0.5/$q$, 3.625/-0.25/$q+1$} {
        \node[blue,anchor=north] at (\x,\y) {\text};
        }
        \foreach \x/\y/\text in {-0.1/4.1/$\eta=0$, 0.3/3.6/$+1$, 0.7/3/0, 1.85/1.8/$\cdots$, 3.3/0.7/0, 4/-0.15/$-1$} {
        \node[green,anchor=south] at (\x,\y) {\text};
        }
        \draw (3.9,-0.25) node[anchor=west]{$\tau + I/2q = D+n$};
    \end{tikzpicture}
    \caption{Continuity components $\mathcal{C}_{D,n}^{m,s}$, $D>(m+\frac{1}{2})/q$}
    \label{fig:contcomp}
\end{figure}

For an atypical circle $\mathcal{C}_{m/q}$ ($m=0,1,\cdots, q-1$), the 0$^{th}$ component $\mathcal{C}_{m/q}^{0}$ vanishes and we have only the remaining $q+1$ components $\mathcal{C}_{m/q}^{s}$ with $s=1,\cdots, q+1$. No secondary cut is required for atypical circles.

\begin{prop}[Skew product structure]\label{prop:skewprod}
    Fix $q\in\mathbb{N}$. Assume that the parameters $(A,B)$ are in $q$-resonance as in Definition \ref{def:resonance}. Then the map $P$ restricted on $\mathcal{C}_D$ is equivalent to the skew product $\eta_F (\tau,n) := (F(\tau),n+\eta(\tau))$ on $\mathbb{S}^1 \times \mathbb{Z}$, where $F:\mathbb{S}^1 \circlearrowleft$ is an interval exchange map, and $\eta:\mathbb{S}^1\to\{0,1,-1\}$ is piecewise constant. 
   
   More precisely, suppose that $D\in\left[ \dfrac{m}{q}, \dfrac{m+1}{q} \right)$ for some $m=0,\cdots,q-1$. Then on the base $\mathbb{S}^1$, for any $s=0,\cdots,q+1$, on $\mathcal{C}_{D}^{m,s}$ we have 
    \begin{equation}\label{eq:iet}
        F(\tau) = \tau + \dfrac{q}{q+1}(1-2D) + \dfrac{2}{q+1}(m+1-s) \pmod1 .
    \end{equation}
    
    On the fiber $\mathbb{Z}$, if $D\in\left( \dfrac{m}{q}, \dfrac{m+\frac{1}{2}}{q} \right]$, then 
    \begin{equation}\label{skewprodtyp1}
        \eta (\tau) = 
        \begin{cases}
            0 & \mathcal{C}_D^{m,s}, \; s=1,\cdots,q,q+1+\\
            1 & \mathcal{C}_D^{m,0}\\
            -1 & \mathcal{C}_D^{m,q+1-}
        \end{cases};
    \end{equation}

    if $D\in\left( \dfrac{m+\frac{1}{2}}{q} , \dfrac{m+1}{q} \right)$, then  
    \begin{equation}\label{skewprodtyp2}
        \eta (\tau) = 
        \begin{cases}
            0 & \mathcal{C}_D^{m,s}, \; s=1,\cdots,q,0-\\
            1 & \mathcal{C}_D^{m,0+}\\
            -1 & \mathcal{C}_D^{m,q+1}
        \end{cases};
    \end{equation}

    and if $D=m/q$, then  
    \begin{equation}\label{skewprodatyp}
        \eta (\tau) \equiv 0.
    \end{equation}
\end{prop}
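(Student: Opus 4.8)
The plan is to substitute the $q$-resonance relation, unfold the invariant circle $\mathcal{C}_D$ into a $\mathbb Z$-cover of a genuine circle, and verify that $P$ becomes the stated skew product in these coordinates. First I would record the algebraic simplifications coming from $(B-A)/A=q$: together with $\mathcal T=T/(AB)$ and $k=(B-A)/T$ one finds $\frac{A}{2(B-A)}=\frac{1}{2q}$, $Ak\mathcal T=\frac{q}{q+1}$ and $Bk\mathcal T=q$, so that \eqref{eq:invcirc} becomes $\mathcal C_D=\{\,\tau+\tfrac{I}{2q}\equiv D\pmod1\,\}$ and the coefficients in \eqref{eq:adianorm1}--\eqref{eq:adianorm2} turn into explicit rational functions of $q$. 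Then I would introduce the identification $\Phi\colon\mathbb S^1\times\mathbb Z\to\mathcal C_D$, $\Phi(\tau,n)=(\tau,I)$ with $I:=2q(n+D-\tau)$ and $\tau\in[0,1)$ the fundamental representative; this is the coordinate ``$\tau+I/2q=D+n$'' appearing in Figure \ref{fig:contcomp}, it is a bijection onto $\mathcal C_D$ (and onto the part where \eqref{eq:adianorm1}--\eqref{eq:adianorm2} hold once we keep $n$ large), and the target becomes the conjugacy $P\circ\Phi=\Phi\circ\eta_F$.

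The heart of the argument is to push $(\tau,n)$ through $P=P_2\circ P_1$. Inserting $I/2=q(n+D-\tau)$ into \eqref{eq:adianorm1} gives $\bar\tau\equiv(q+1)\tau-qD\pmod1$, the integer $qn$ dropping out; writing $\bar\tau=(q+1)\tau-qD-j$ with $j\in\mathbb Z$ the unique integer bringing $\bar\tau$ into $[0,1)$, the jump points of $j(\tau)$ are exactly the division points $\frac{qD-m+s}{q+1}$ of \eqref{eq:typbasecomp}, and one reads off $j=s-m-1$ on $\mathcal C_D^{m,s}$. Feeding $\bar\tau$ and $\bar I=I+\frac{2q}{q+1}(2\bar\tau-1)$ into \eqref{eq:adianorm2} and simplifying the $\tau$-component, the $qn$-terms again vanish modulo $1$ and the term $\frac{(q-1)j}{q+1}$ is rewritten via $\frac{(q-1)j}{q+1}\equiv\frac{2(m+1-s)}{q+1}\pmod1$ for $j=s-m-1$; what remains is precisely the formula \eqref{eq:iet}, so the $\tau$-coordinate of $P(\Phi(\tau,n))$ equals $F(\tau)$. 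Because $P$ is invertible and $\Phi$ a bijection, $F$ is automatically a bijective piecewise translation, hence an interval exchange --- the translation structure itself being the manifestation of $DP$ being unipotent with invariant direction $(1,-2q)$, as noted before the statement. For the fiber I would track the momentum: with $\bar I$ as above and $\bar{\bar I}=\bar I-2q(2F(\tau)-1)$, solving $\bar{\bar I}=2q\bigl(n''+D-F(\tau)\bigr)$ for $n''$ collapses, after cancellation, to $\eta:=n''-n=i_0(\tau)$, where $i_0(\tau)\in\mathbb Z$ is the integer one must subtract from the un-reduced quantity $\tau+\frac{q}{q+1}(1-2D)+\frac{2}{q+1}(m+1-s)$ to place it in $[0,1)$.

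It then remains to evaluate $i_0$ component by component. On $\mathcal C_D^{m,s}$ with $1\le s\le q$ the un-reduced quantity already lies in $[0,1)$, so $i_0=0$; on the extreme components $s=0$ and $s=q+1$, using $D\in(m/q,(m+1)/q)$, one checks it lies in $[-1,2)$, whence $i_0\in\{-1,0,1\}$, and the unique abscissa where it crosses an integer is exactly $\tau^{m,q+1}$ when $D\le(m+\tfrac12)/q$ (on the ending component) and $\tau^{m,0}$ when $D>(m+\tfrac12)/q$ (on the starting component); comparing the resulting sign pattern with the secondary cuts \eqref{eq:seccutq}--\eqref{eq:seccut0} produces \eqref{skewprodtyp1} and \eqref{skewprodtyp2}. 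For the atypical circle $D=m/q$ the defining interval of $\mathcal C_D^{m,0}$ degenerates (it meets $[0,1)$ in the empty set), the shifts $\frac{q}{q+1}(1-2D)+\frac{2}{q+1}(m+1-s)$ become integer-aligned so that the un-reduced quantity never leaves $[0,1)$, and hence $\eta\equiv0$, which is \eqref{skewprodatyp}. The step I expect to be the real obstacle is precisely this floor-function bookkeeping: carrying the two separate integer reductions --- one inside $P_1$, yielding $j$ and the index $s$, one at the output of $P_2$, yielding $\eta$ --- through consistently, and checking that the integer-crossing of the output on the extreme component sits exactly at the prescribed secondary-cut abscissa; the rest is unipotent linear algebra and elementary interval arithmetic.
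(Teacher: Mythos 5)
Your proposal is correct and follows essentially the same route as the paper: you unfold $\mathcal{C}_D$ into the floors $\tau+\tfrac{I}{2q}=D+n$, push $(\tau,n)$ through the resonance-simplified normal forms, and determine $\eta$ by component-by-component interval arithmetic with exactly the secondary cuts \eqref{eq:seccutq}--\eqref{eq:seccut0}. The only difference is organizational — you first prove the general identities that the $\tau$-image is the affine expression of \eqref{eq:iet} mod $1$ and that $\eta$ equals the integer discarded in that reduction, whereas the paper recomputes $\bar{\bar{\tau}}$ and the floor number case by case — and your stated quantitative claims (the simplifications $Ak\mathcal{T}=\tfrac{q}{q+1}$, $Bk\mathcal{T}=q$, the reduction integer $j=s-m-1$, the ranges of the unreduced quantity, and the crossing abscissas $\tau^{m,0}$, $\tau^{m,q+1}$) all check out against the paper's computations.
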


\begin{proof}
We begin with typical $D\in\left( \dfrac{m}{q}, \dfrac{m+1}{q} \right)$.

For $n\in\mathbb{Z}$, we define 
\begin{equation}\label{eq:Cdn}
    \mathcal{C}_{D,n} := \left\{(\tau,I)\, \big| \, \tau + \dfrac{I}{2q} = D+n \right\}.
\end{equation}  
The invariant circle $\mathcal{C}_{D}$ is decomposed into the family $\{\mathcal{C}_{D,n}\}_n$ of short lines indexed by the ``floor number" $n$. 

On each floor $\mathcal{C}_{D,n}$, $\tau - \dfrac{I}{2}$ ranges within $\big(-qn -(m+1), -qn-(m-q) \big)$, since $D\in \left( \dfrac{m}{q}, \dfrac{m+1}{q} \right)$. The singularities arising from $\tau - I/2 \in\mathbb{Z}$ cut $\mathcal{C}_{D,n}$ into $q+1$ components $\mathcal{C}_{D,n}^{m,s}$, $s=0,\cdots,q+1$ (c.f. the blue part in Fig.\ref{fig:contcomp}). The cutting is determined by the following equations 
\begin{equation}\label{eq:cut1}
    \begin{cases}
        \tau + \dfrac{I}{2q} = D+n \\
        \tau - \dfrac{I}{2}  = -qn - (m+1-s) \; .
    \end{cases} 
\end{equation}

We denote $(\bar{\tau},\bar{I}) = P_1(\tau,I)$. On each $\mathcal{C}_{D,n}^{m,s}$ ($s=0,\cdots,q+1$), by Proposition \ref{prop:adianorm} we have 
\begin{equation}\label{eq:taubaribar1}
    \begin{aligned}
        \bar\tau 
    &= \tau - \dfrac{I}{2} \pmod1\\
    &= \tau - \dfrac{I}{2} + (m+1-s) + qn \; \in (0,1) \; ,\\
    \bar{I} 
    &= I + \dfrac{2q}{q+1} (2\bar\tau - 1)\\
    &= I + \dfrac{2q}{q+1} \big( 2\tau - I + 2(m-s) + 2qn + 1 \big) \; .
    \end{aligned}
\end{equation}

\vspace{5pt}

Next, the singularities arising from $\bar{\tau}-\dfrac{\bar{I}}{2} \in \mathbb{Z}$ may require further cut on $\mathcal{C}_{D,n}^{m,s}$. We denote $(\bar{\bar{\tau}},\bar{\bar{I}}) = P_2(\bar\tau,\bar I) = P_2 \circ P_1 (\tau,I)$.

For $s=0,\cdots,q+1$, by \eqref{eq:taubaribar1} we have 
\begin{equation*}
    \begin{aligned}
        \bar{\tau}-\dfrac{\bar{I}}{2} 
        &= \dfrac{1-q}{q+1} \big( \tau - \dfrac{I}{2} + (m+1-s) + qn \big) - \dfrac{I}{2} + \dfrac{q}{q+1} \\
        &\buildrel \eqref{eq:Cdn} \over = \tau + \dfrac{q}{q+1}(1-2D) + \dfrac{2}{q+1}(m+1-s) - (m+1-s) - qn \; .
    \end{aligned} 
\end{equation*}

For the middle components $\mathcal{C}_{D,n}^{m,s}$, $s=1,\cdots,q$, by solving \eqref{eq:cut1} the left endpoint $L_s$ and right endpoint $R_s$ of $\mathcal{C}_{D,n}^{m,s}$ are respectively 
\begin{equation*}
    L_s :
    \begin{cases}
        \tau_L^s = \dfrac{q}{q+1}D - \dfrac{1}{q+1}(m+1-s) \\
        I_L^s = 2q \left(D+n - \dfrac{q}{q+1}D + \dfrac{1}{q+1}(m+1-s)\right) \; ,
    \end{cases}
\end{equation*}
and 
\begin{equation*}
    R_s :
    \begin{cases}
        \tau_R^s = \dfrac{q}{q+1}D - \dfrac{1}{q+1}(m-s) \\
        I_R^s = 2q \left(D+n - \dfrac{q}{q+1}D + \dfrac{1}{q+1}(m-s) \right).
    \end{cases}
\end{equation*}

The values of $\bar{\tau}-\dfrac{\bar{I}}{2}$ at the endpoints $L_s, R_s$ are respectively 
\begin{align*}
    \bar{\tau}-\dfrac{\bar{I}}{2} \bigg\vert_{L_s} 
    &= -\dfrac{q}{q+1}D + \dfrac{1}{q+1}(m+1-s) - (m+1-s) - qn + \dfrac{q}{q+1} \; , \\
    \bar{\tau}-\dfrac{\bar{I}}{2} \bigg\vert_{R_s} 
    &= -\dfrac{q}{q+1}D + \dfrac{1}{q+1}(m-s) - (m+1-s) - qn + \dfrac{q}{q+1} \; .
\end{align*}
Since $D\in\left( \dfrac{m}{q}, \dfrac{m+1}{q} \right)$, $\bar{\tau}-\dfrac{\bar{I}}{2} \bigg\vert_{L_s}$ and $\bar{\tau}-\dfrac{\bar{I}}{2} \bigg\vert_{R_s}$ both take value in the same integer interval 
\[
\big( -(m+1-s)-qn,-(m-s)-qn \big).
\]
Consequently no further cutting is required and 
\begin{align}
    \bar{\bar{\tau}} &= \bar{\tau}-\dfrac{\bar{I}}{2} \pmod1 \nonumber\\
    &= \bar{\tau}-\dfrac{\bar{I}}{2} + (m+1-s) + qn \; \in (0,1) \nonumber\\
    &= \tau + \dfrac{q}{q+1}(1-2D) + \dfrac{2}{q+1}(m+1-s) \; , \label{eq:barbartau}
\end{align}
which agrees with \eqref{eq:iet}.

\vspace{5pt}

Now we compute the floor change $\eta$ for the middle components. We need to compute the new floor number $\bar{\bar{n}}$ for the image point $(\bar{\bar{\tau}},\bar{\bar{I}}) \in \mathcal{C}_{D,\bar{\bar{n}}}$. 
\begin{align*}
    \bar{\bar{\tau}} + \dfrac{\bar{\bar{I}}}{2q}
    &= \bar{\bar{\tau}} + \dfrac{\bar{I}}{2q} - (2\bar{\bar{\tau}} -1) = - \bar{\bar{\tau}} + \dfrac{\bar{I}}{2q} +1\\
    &\buildrel \eqref{eq:barbartau} \over = - \tau - \dfrac{q}{q+1}(1-2D) - \dfrac{2}{q+1}(m+1-s) + \dfrac{I}{2q} + \dfrac{2\bar\tau -1}{q+1} + 1 \\
    &= \underbrace{\tau + \dfrac{I}{2q}}_{D+n} - 2\tau - \dfrac{q}{q+1}(1-2D) - \dfrac{2}{q+1}(m+1-s) + \dfrac{2\tau -I}{q+1} \\
    &\quad +\dfrac{2(m+1-s)+2qn-1}{q+1} +1\\
    &= D + n - \dfrac{2q}{q+1}\underbrace{\left(\tau + \dfrac{I}{2q}\right)}_{D+n} - \dfrac{q}{q+1}(1-2D) + \dfrac{2qn-1}{q+1} +1\\
    &= D + n + D\left(-\dfrac{2q}{q+1}+\dfrac{2q}{q+1}\right) + n \left(-\dfrac{2q}{q+1}+\dfrac{2q}{q+1}\right) \\
    &\quad - \dfrac{q}{q+1} - \dfrac{1}{q+1} +1 \\
    &= D + n \; .
\end{align*}
Therefore $\bar{\bar{n}} = n$ (or equivalently $\eta=0$) on the middle components $\mathcal{C}_{D,n}^{m,s}$, $s=1,\cdots,q$, which verifies the corresponding parts in \eqref{skewprodtyp1} \eqref{skewprodtyp2}.

\vspace{10pt}

Next, on the starting component $\mathcal{C}_{D,n}^{m,0}$, we need a further cut only when $D\in\left(\dfrac{m+\frac{1}{2}}{q},\dfrac{m+1}{q}\right)$.

Indeed, the formulas for right endpoint $R_s$ still hold for $s=0$, but the left endpoint point is now at the boundary $L_0=0$ of the phase cylinder, and hence 
\begin{equation*}
    \bar{\tau}-\dfrac{\bar{I}}{2} \bigg\vert_{L_0} 
    = -\dfrac{2q}{q+1}D + \dfrac{2m+1}{q+1} - m - qn \; .
\end{equation*}

If $D\in\left(\dfrac{m}{q},\dfrac{m+\frac{1}{2}}{q}\right]$, then $\bar{\tau}-\dfrac{\bar{I}}{2} \bigg\vert_{L_0}$ and $\bar{\tau}-\dfrac{\bar{I}}{2} \bigg\vert_{R_0}$ both take value in the same integer interval 
\[
\big[ -m-qn,-m-qn + 1 \big)
\]
and consequently no further cutting is required. In this case, we follow the same procedures as for the middle components and obtain 
\begin{equation*}
    \begin{cases}
        \bar{\bar{\tau}} &= \tau + \dfrac{q}{q+1}(1-2D) + \dfrac{2}{q+1}(m+1) -1 \; \in (0,1) \\
        \bar{\bar{n}} &= n+1 \;
    \end{cases}
\end{equation*}
which verifies \eqref{eq:iet} and the corresponding part for the component $\mathcal{C}_{D,n}^{m,0}$ in \eqref{skewprodtyp1}.

\vspace{5pt}

If $D\in\left(\dfrac{m+\frac{1}{2}}{q},\dfrac{m+1}{q}\right)$, then $\bar{\tau}-\dfrac{\bar{I}}{2} \bigg\vert_{L_0}$ and $\bar{\tau}-\dfrac{\bar{I}}{2} \bigg\vert_{R_0}$ now take value in two different integer intervals
\begin{equation*}
   \begin{split}
       \bar{\tau}-\dfrac{\bar{I}}{2} \bigg\vert_{L_0} \in \big( -m-qn -1 ,-m-qn \big) \\
       \bar{\tau}-\dfrac{\bar{I}}{2} \bigg\vert_{R_0} \in \big( -m-qn,-m-qn + 1 \big) 
   \end{split}
\end{equation*}
and consequently a further cutting is required at $\tau^{0} = 1-\frac{q}{q+1}(1-2D)-\frac{2}{q+1}(m+1)$ (c.f. the red part in Fig.\ref{fig:contcomp}), which is solved from $\bar{\tau}-\dfrac{\bar{I}}{2}=-m-qn$.

By following the same procedures as for the middle components, we obtain on the component $\mathcal{C}_{D,n}^{m,0-}$
\begin{equation*}
    \begin{cases}
        \bar{\bar{\tau}} &= \tau + \dfrac{q}{q+1}(1-2D) + \dfrac{2}{q+1}(m+1) \; \in (0,1) \\
        \bar{\bar{n}} &= n \;
    \end{cases}
\end{equation*}
and on the component $\mathcal{C}_{D,n}^{m,0+}$
\begin{equation*}
    \begin{cases}
        \bar{\bar{\tau}} &= \tau + \dfrac{q}{q+1}(1-2D) + \dfrac{2}{q+1}(m+1) -1 \; \in (0,1) \\
        \bar{\bar{n}} &= n+1 \;
    \end{cases}
\end{equation*}
which verifies \eqref{eq:iet} and the corresponding part for the components $\mathcal{C}_{D,n}^{m,0\pm}$ in \eqref{skewprodtyp2}.

Similarly, for the ending component $\mathcal{C}_{D,n}^{m,q+1}$, we need a further cut only when $D\in\left(\dfrac{m}{q},\dfrac{m+\frac{1}{2}}{q}\right]$, and we cut at $\tau^{q+1}= 2-\frac{q}{q+1}(1-2D)-\frac{2}{q+1}(m+1)$. By similar computations, we obtain that if $D\in\left(\dfrac{m}{q},\dfrac{m+\frac{1}{2}}{q}\right]$, then on the component $\mathcal{C}_{D,n}^{m,q+1-}$
\begin{equation*}
    \begin{cases}
        \bar{\bar{\tau}} &= \tau + \dfrac{q}{q+1}(1-2D) + \dfrac{2}{q+1}(m+1-(q+1)) +1 \; \in (0,1) \\
        \bar{\bar{n}} &= n -1 \;
    \end{cases}
\end{equation*}
and on the component $\mathcal{C}_{D,n}^{m,q+1+}$
\begin{equation*}
    \begin{cases}
        \bar{\bar{\tau}} &= \tau + \dfrac{q}{q+1}(1-2D) + \dfrac{2}{q+1}(m+1-(q+1)) \; \in (0,1) \\
        \bar{\bar{n}} &= n \; ;
    \end{cases}
\end{equation*}
and if $D\in\left(\dfrac{m+\frac{1}{2}}{q},\dfrac{m+1}{q}\right)$, then on the component $\mathcal{C}_{D,n}^{m,q+1}$
\begin{equation*}
    \begin{cases}
        \bar{\bar{\tau}} &= \tau + \dfrac{q}{q+1}(1-2D) + \dfrac{2}{q+1}(m+1-(q+1)) +1 \; \in (0,1) \\
        \bar{\bar{n}} &= n-1 \; ,
    \end{cases}
\end{equation*}
which again verifies \eqref{eq:iet} and the remaining parts in \eqref{skewprodtyp1} \eqref{skewprodtyp2}.

\vspace{5pt}

Lastly, for atypical $D=\dfrac{m}{q}$, the leftmost and rightmost cut occur at exactly the left and boundaries of the phase cylinder, producing exactly $q+1$ components $\mathcal{C}_{D,n}^{m,s}$, $s=1,\cdots,q+1$, all of uniform length $1/(q+1)$. It is straightforward to see that they all behave the same way as the middle components in the typical case: no further cut is required and on each component $\mathcal{C}_{D,n}^{m,s}$, $s=1,\cdots,q+1$ 
\begin{equation*}
    \begin{cases}
        \bar{\bar{\tau}} &= \tau + \dfrac{q}{q+1}(1-2D) + \dfrac{2}{q+1}(m+1-s) +1\\
        &= \tau + 1 +\dfrac{1-2s}{q+1} \; \in (0,1) \\
        \bar{\bar{n}} &= n \;
    \end{cases}
\end{equation*}
which verifies \eqref{eq:iet} and \eqref{skewprodatyp}.
\end{proof}

\begin{rmk}\label{rmk:ietq+1}
    We observe from \eqref{eq:iet} that for typical $D$, $F$ has the same definition on the starting and ending components $\mathcal{C}_D^{m,0},\mathcal{C}_D^{m,q+1}$, and hence we may link these two components into one and then $F$ is now an interval exchange map of $q+1$ intervals, each of uniform length $1/(q+1)$. The same observation is automatically true for atypical $D$.
\end{rmk}

%-------------------------------------------------------------
\subsection{Non-escaping and Recurrence}\label{ssec:rec}

In this section we prove the main result Theorem \ref{thm:rec}, i.e. recurrence of the classical piecewise linear Fermi-Ulam models under resonance, by showing that the skew product structure on an irrational invariant circle has an ergodic base and a zero-average cocycle.

%-------------------------------------------------------------
\subsubsection{Ergodicity of the Irrational Base Map}

Firstly, we show that the base map $F$ on an irrational circle is ergodic.

\begin{prop}\label{prop:erg}
    When restricted on an invariant circle $\mathcal{C}_D$ with $D$ irrational, the map $F$ is ergodic with respect to the Lebesgue measure.
\end{prop}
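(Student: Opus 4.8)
The plan is to exploit the structure from Proposition~\ref{prop:skewprod} and Remark~\ref{rmk:ietq+1}: on $\mathcal{C}_D$ the map $F$ is an interval exchange of $q+1$ intervals, each of length $1/(q+1)$, acting on the $s$-th piece as $\tau\mapsto\tau+c_s$ with $c_s=\frac{q}{q+1}(1-2D)+\frac{2}{q+1}(m+1-s)$. The key arithmetic point is that $(q+1)c_s\equiv\beta\pmod1$, with $\beta:=-2qD$, independently of $s$; hence, setting $\Phi(\tau):=(q+1)\tau\pmod1$ — the quotient of $\mathbb{S}^1$ by the translation $T\colon\tau\mapsto\tau+\frac1{q+1}$ — we get $\Phi\circ F=R_\beta\circ\Phi$, where $R_\beta$ is rotation by $\beta$. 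Since $D$ is irrational so is $\beta$, so $R_\beta$ is uniquely ergodic. Directly from the formula one also checks $F\circ T=T^{-1}\circ F$, so $F$ is a degree-$(q+1)$ extension of the irrational rotation $R_\beta$ with an orientation-reversing action on the fibers: measurably, $F$ is conjugate to $(x,j)\mapsto(x+\beta,\,\gamma(x)-j)$ on $\mathbb{S}^1\times\mathbb{Z}/(q+1)$ with $\gamma$ piecewise constant, its discontinuities sitting over $\Phi$-preimages of rational translates of $qD$.

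Next, taking $\phi\in L^2(\mathcal{C}_D)$ with $\phi\circ F=\phi$, I would decompose $\phi=\sum_{k\in\mathbb{Z}/(q+1)}\phi_k$ into $T$-isotypic components, $\phi_k\circ T=e^{2\pi ik/(q+1)}\phi_k$. Averaging over $\{T^j\}$ is $F$-equivariant (the twist $F\circ T=T^{-1}\circ F$ only permutes the summands), so the conditional expectation onto $T$-invariant functions commutes with the Koopman operator of $F$; hence $\phi_0$ is $F$-invariant, and being $T$-invariant it descends through $\Phi$ to an $R_\beta$-invariant function, so $\phi_0$ is constant. This settles the base direction.

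For the fibers, the twist forces $F$ to carry the $k$-isotypic subspace into the $(-k)$-isotypic one, so matching components in $\phi\circ F=\phi$ yields $\phi_k\circ F=\phi_{-k}$ and $\phi_{-k}\circ F=\phi_k$. Thus each $\phi_k$ is $F^2$-invariant, so $|\phi_k|$ is $T$-invariant and $F^2$-invariant and — using $\Phi\circ F^2=R_{2\beta}\circ\Phi$, $2\beta$ irrational — descends to a constant $a_k$. It remains to show $a_k=0$ for $k\ne0$. Restricting to a measurable section of $\Phi$, the pair of relations above becomes a system over the base coupling $\phi_k$ and $\phi_{-k}$ through the cocycle $e^{2\pi ik\gamma(\cdot)/(q+1)}$ and $R_\beta$; if $a_k>0$ one normalizes $|\phi_k|\equiv1$ and concludes that the finite-group cocycle $k\gamma$ would yield a non-ergodic $\mathbb{Z}/(q+1)$-extension of $R_\beta$, i.e.\ $e^{2\pi ik\gamma/(q+1)}$ would be trivial in the cohomology of $R_\beta$ up to a character evaluated along orbits. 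This contradicts the irrationality of $\beta$ together with the explicit position of the discontinuities of $\gamma$ (none of which lies on an $R_\beta$-orbit of a rational point, since $qD\notin\mathbb{Q}$). Hence $\phi_k\equiv0$ for $k\ne0$ and $\phi$ is constant.

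The main obstacle is this last step: proving that the reversing $\mathbb{Z}/(q+1)$-extension is ergodic, equivalently that the step cocycle $\gamma$ carries no nontrivial cohomology over $R_\beta$ of the relevant type, \emph{without} any Diophantine hypothesis on $D$. Everything else — the reduction to a finite extension of an irrational rotation and the handling of $\phi_0$ — is formal. A more concrete variant runs the same computation on an invariant set $A$ directly: it forces, for a.e.\ $x$, the fiber $\Phi^{-1}(x)$ to meet $A$ in exactly $p$ of its $q+1$ points with $\mathrm{Leb}(A)=p/(q+1)$, and one then excludes $0<p<q+1$ by applying the cocycle argument to the fiberwise Fourier coefficients $\sum_{j\in S_x}e^{2\pi i\ell j/(q+1)}$, $1\le\ell\le q$.
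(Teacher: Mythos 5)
Your reduction is sound as far as it goes, and it mirrors the paper's: you quotient by the translation $T$ by $1/(q+1)$ to exhibit $F$ as a finite extension of the irrational rotation $R_\beta$, $\beta=-2qD$, you correctly identify the orientation-reversing fiber action ($F\circ T=T^{-1}\circ F$), and your handling of the $T$-isotypic component $\phi_0$ and of the relations $\phi_k\circ F=\phi_{-k}$ (hence $F^2$-invariance of each $\phi_k$ and constancy of $|\phi_k|$) is correct and formal. But the proof is incomplete exactly where you say it is: excluding $a_k>0$ for $k\neq0$ amounts to showing that the $\mathbb{Z}/(q+1)$-valued step cocycle arising from $F^2$ over the rotation $R_{2\beta}$ is ergodic (equivalently, that $e^{2\pi i k\gamma/(q+1)}$ is not a quasi-coboundary for any $k\not\equiv0$), with no Diophantine hypothesis on $D$. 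Your proposed contradiction ``irrationality of $\beta$ together with the position of the discontinuities of $\gamma$'' is not an argument: step cocycles over irrational rotations with irrational discontinuity positions can perfectly well be coboundaries (this depends delicately on orbit relations between the discontinuities and on the jump values), and in the present situation two of the three discontinuities of the squared cocycle do lie on the same $R_{2\beta}$-orbit (they differ by exactly one rotation step), so a blanket ``no two discontinuities are orbit-related'' claim is not even available. This is the actual mathematical content of the proposition, and it cannot be waved through.

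The paper closes precisely this gap by a different device at the final stage: after squaring (as you do) it obtains a skew product $h(x,k)=(x+2\varkappa,\,k+\phi(x))$ with $\phi$ piecewise constant taking values in $\{0,\pm1\}$, and then invokes a minimality criterion for finite group extensions of interval exchange transformations (Lemma \ref{lemma:criteria}, from \cite{SP}): one codes the rotation by the discontinuities of $\phi$, lists the return words of a vertex of the Rauzy graph, computes the cocycle sums $d_j$ along them (here $d_1=1$, $d_2=0$, $d_3=-1$), and concludes minimality from $\gcd(q+1,d_1,d_2,d_3)=1$; ergodicity then follows from minimality plus the preserved measure $\mathrm{Leb}\otimes\mathrm{Count}$ via ergodic decomposition. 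Notice that this criterion works for all irrational $D$, which is exactly the uniformity your sketch lacks. To complete your argument you would need either to import such a criterion (or a Veech/Guenais--Parreau-type non-coboundary result for rational-valued step functions) and verify its hypotheses for your $\gamma$, or to reproduce an equivalent combinatorial computation; as written, the decisive step is asserted rather than proved.
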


Before the proof, let us simplify the map $F$. We fix $D\notin \mathbb{Q}$.

Recall from Remark \ref{rmk:ietq+1} that on an invariant circle $\mathcal{C}_D$ the base map $F$ is an interval exchange map of $q+1$ intervals, each of length $1/(q+1)$. In fact, $F$ is isomorphic to a finite extension of circle rotations. Indeed, by \eqref{eq:iet}, we see that, $$F\left(\tau+\frac{1}{q+1}\right)\pmod{\frac{1}{q+1}}=F(\tau)\pmod{1/(q+1)},$$therefore, $F$ factors onto ${\mathbb S}^{1/(q+1)}$, with the factor map $\tilde F$ given by $$\tilde{F}(\tau \pmod{1/(q+1)})=\tau+\frac{-2q}{q+1}D\pmod{1/(q+1)}.$$From above, it yields that $F$ is isomorphic to a skew product system defined on $$\Sigma:=\mathbb{S}^{1/(q+1)} \times \{1,\cdots,q+1\},$$ of the form $$H(x,k)=(\tilde{F}(x),\sigma_x(k)),\;\forall\, (x,k)\in\Sigma,$$where $\sigma_x$ is a permutation of $\{1,\cdots,q+1\}$. A simple computation shows that $$\sigma_x(k)\buildrel \mod{(q+1)} \over =\begin{cases}
    2m-k+[q(1-2D)],&\text{if} \;x\in [0,\tau^*)\\
    2m-k+[q(1-2D)]+1,&\text{if}\; x\in [\tau^*,\frac{1}{q+1})
\end{cases}$$ where $\tau^*=\frac{1-\{q(1-2D)\}}{q+1}\in (0,\frac{1}{q+1})$.

Now for simplicity, let $\varkappa=\{-2qD\}$. After converting $\mathbb{S}^{\frac{1}{q+1}}$ to $\mathbb{S}^1$, we find that $H$ is isomorphic to, with abuse of notation, $$\hat H(x,k)=(x+\varkappa,\sigma_x(k)),$$where $$\sigma_x(k)=\begin{cases}
    2m-k+[q(1-2D)],&\text{if} \;x\in [0,1-\varkappa);\\
    2m-k+[q(1-2D)]+1,&\text{if}\; x\in [1-\varkappa,1).
\end{cases}$$

Denote $\sigma^1(k):=2m-k+[q(1-2D)]$, and $\sigma^2(k):=2m-k+[q(1-2D)]+1$, for any $k\in\{1,\cdots, q+1\}$. Let $\sigma^3(k):=k+1$. A simple calculation shows that $\sigma^1\circ\sigma^1=\sigma^2\circ\sigma^2=I$, $\sigma^2\circ\sigma^1=\sigma^3$ and $\sigma^1\circ\sigma^2=(\sigma^3)^{-1}$. This observation motivates us to consider $\hat H\circ \hat H$ instead of $\hat H$ directly. 

We find that $\hat H\circ \hat H$ is isomorphic to $h(x,k)=(x+2\varkappa,k+\phi(x))$. Here if $\varkappa\in (0,\frac{1}{2})$, $$\phi(x)=\begin{cases}
    0,&\text{if} \;x\in [0,1-2\varkappa);\\
    1,&\text{if}\; x\in [1-2\varkappa,1-\varkappa);\\
    -1,&\text{if}\; x\in [1-\varkappa,1).
\end{cases}$$If otherwise $\varkappa\in(\frac{1}{2},1)$, $$\phi(x)=\begin{cases}
    1,&\text{if} \;x\in [0,1-\varkappa);\\
    -1,&\text{if}\; x\in [1-\varkappa,2-2\varkappa);\\
    0,&\text{if}\; x\in [2-2\varkappa,1).
\end{cases}$$

\begin{prop}\label{prop:erg1}
    The map $h$ defined on $\mathbb{S}^1\times \{1,\cdots,q+1\}$, is ergodic.
\end{prop}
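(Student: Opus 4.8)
The plan is to realize $h$ as a finite abelian group extension of an ergodic rotation and to kill all nontrivial eigenfunctions by ruling out coboundaries, reducing the analytic core to Denjoy--Koksma control of Birkhoff sums of a square wave.

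\emph{Step 1 (Fourier reduction).} Note first that $h$ preserves the probability measure $\mu=\mathrm{Leb}\otimes(\text{uniform counting on }\mathbb{Z}_{q+1})$, identifying $\{1,\dots,q+1\}$ with $\mathbb{Z}_{q+1}$. Decomposing $L^2(\mu)=\bigoplus_{j=0}^{q}\mathcal{H}_j$ along the characters of $\mathbb{Z}_{q+1}$, each $\mathcal{H}_j=\{f(x)e^{2\pi ijk/(q+1)}:f\in L^2(\mathbb{S}^1)\}$ is $h$-invariant, and an $h$-invariant element of $\mathcal{H}_j$ amounts to a solution of $f(x+2\varkappa)\,e^{2\pi ij\phi(x)/(q+1)}=f(x)$. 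Since $D\notin\mathbb{Q}$ and $q\in\mathbb{N}$ give $2qD\notin\mathbb{Q}$, we have $\varkappa=\{-2qD\}\notin\mathbb{Q}$, so $2\varkappa$ is irrational and $R_{2\varkappa}$ is uniquely ergodic; hence for $j=0$ the only solution is a constant. For $j\ne0$, if $f\not\equiv0$ then $|f|$ is $R_{2\varkappa}$-invariant hence a nonzero constant, so after normalization $x\mapsto\zeta_j^{\phi(x)}$ ($\zeta_j=e^{2\pi ij/(q+1)}\ne1$) is a measurable unimodular coboundary over $R_{2\varkappa}$. Thus it suffices to show $\zeta^{\phi}$ is \emph{not} a coboundary over $R_{2\varkappa}$ for every root of unity $\zeta\ne1$, i.e.\ that the group of essential values of the $\mathbb{Z}_{q+1}$-cocycle $\phi$ over $R_{2\varkappa}$ is all of $\mathbb{Z}_{q+1}$.

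\emph{Step 2 (inducing).} Assume $\varkappa\in(0,\tfrac12)$, the case $\varkappa\in(\tfrac12,1)$ being obtained by replacing $D$ with $-D$ (which conjugates $h$ to a time-reversed model). Then $\phi$ is supported on the arc $J=[1-2\varkappa,1)$, of length exactly $2\varkappa$, and $\zeta^{\phi}\equiv1$ off $J$; since cohomology is preserved by inducing a cocycle supported in the inducing domain, $\zeta^{\phi}$ is a coboundary over $R_{2\varkappa}$ iff $\zeta^{\phi}\big|_J$ is a coboundary over the first-return map $R_{2\varkappa}\big|_J$. An elementary continued-fraction computation identifies the latter, after rescaling $J$ to $\mathbb{S}^1$, with the irrational rotation $R_{\hat\rho}$, $\hat\rho=1-\{1/(2\varkappa)\}$, and turns $\phi\big|_J$ into the square wave $g:=\mathbf{1}_{[0,1/2)}-\mathbf{1}_{[1/2,1)}=2\mathbf{1}_{[0,1/2)}-1$. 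So the task becomes: the $\mathbb{Z}_{q+1}$-extension $(u,k)\mapsto(u+\hat\rho,k+g(u))$ of $R_{\hat\rho}$ is ergodic, for every irrational $\hat\rho$.

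\emph{Step 3 (prime reduction and the core estimate).} Since the maximal subgroups of $\mathbb{Z}_{q+1}$ are the index-$p$ ones for primes $p\mid q+1$, it is enough to show $\xi_p^{g}$ ($\xi_p=e^{2\pi i/p}$) is not a coboundary over $R_{\hat\rho}$ for each such $p$. For $p=2$: $g\equiv1\pmod2$, so $\xi_2^{g}\equiv-1$ is the constant $-1$, which is a coboundary only if $-1$ is an eigenvalue of $R_{\hat\rho}$, i.e.\ only if $\hat\rho\in\mathbb{Q}$ — impossible. For odd $p$, suppose $\xi_p^{g}=f\circ R_{\hat\rho}/f$ with $f$ measurable unimodular; iterating, $\xi_p^{\,g^{(n)}}=f/(f\circ R_{\hat\rho}^{\,n})$, and along the convergent denominators $q_k$ of $\hat\rho$, continuity of translation in $L^2$ forces $\xi_p^{\,g^{(q_k)}}\to1$ in measure, i.e.\ $\mu(\{p\nmid g^{(q_k)}\})\to0$. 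But Denjoy--Koksma gives $\|g^{(q_k)}\|_\infty\le\mathrm{Var}(g)=4$, and since consecutive convergent denominators are coprime, infinitely many $q_k$ are odd, along which $g^{(q_k)}=2S_{q_k}-q_k$ is an odd integer, so $g^{(q_k)}\in\{\pm1,\pm3\}$. For $p\ge5$ this contradicts $\mu(\{p\nmid g^{(q_k)}\})\to0$ outright. For $p=3$ one must exclude that $g^{(q_k)}$ concentrates on $\{\pm3\}$: using the antisymmetry $g^{(q_k)}(u+\tfrac12)=-g^{(q_k)}(u)$, Denjoy--Koksma forces every interval on which $g^{(q_k)}=\pm3$ to be flanked by intervals on which $g^{(q_k)}=\pm1$, and a three-distance count of the $2q_k$ discontinuities $\{-i\hat\rho,\ \tfrac12-i\hat\rho:0\le i<q_k\}$ shows the ``$\pm1$'' intervals carry a definite fraction of the measure (equivalently, the correlations $\int g\cdot(g\circ R_{\hat\rho}^{\,\ell})=1-4\|\ell\hat\rho\|$ keep $\|g^{(q_k)}\|_{L^2}^2$ bounded away from $9$). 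This contradiction completes Step 3.

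\emph{Conclusion and main obstacle.} Putting the steps together, no $\mathcal{H}_j$ with $j\ne0$ carries a nonzero invariant function, so $h$ is ergodic. The delicate point — and where I expect the real work — is Step 3 for the prime $p=3$, and more generally the requirement that the conclusion hold for \emph{all} irrational $\hat\rho$ without any Diophantine hypothesis: soft ergodic-theoretic input is not enough, one must control the Birkhoff sums of the square wave over an arbitrary irrational rotation finely enough to exclude the degenerate scenario in which $g^{(q_k)}$ collapses onto a proper coset, and this is exactly where the three-distance/Ostrowski combinatorics of the discontinuity set is unavoidable.
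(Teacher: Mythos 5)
Your route is genuinely different from the paper's (which applies the Rauzy-graph minimality criterion of \cite{SP} to the three-interval extension and then upgrades minimality to ergodicity), and much of it is sound: the character decomposition in Step 1 is the standard reduction, the inducing in Step 2 is correct (the induced cocycle really is the square wave, since $\phi$ vanishes off $J$ and the induced map of $R_{2\varkappa}$ on an interval of length $2\varkappa$ is a two-interval exchange, i.e.\ a rotation), and in Step 3 your $p=2$ argument and your Denjoy--Koksma-plus-parity argument for $p\ge 5$ are complete and rather elegant.

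The genuine gap is exactly where you flag it: the prime $p=3$, which you cannot discard since $3\mid q+1$ for infinitely many $q$ (already $q=2$). There you must exclude that $g^{(q_k)}$ concentrates on $\{\pm 3\}$ along odd denominators, i.e.\ that $\mu(|g^{(q_k)}|=3)\to 1$, and this is asserted, not proved. The two justifications you offer do not close it. The ``flanking'' observation is true (each jump of $g^{(q_k)}$ has size $2$, so a level-$\pm3$ plateau is adjacent to level-$\pm1$ plateaus), but it says nothing about measure: a priori the $\pm1$ plateaus could be arbitrarily short compared with the $\pm3$ ones, and ruling this out for \emph{every} irrational $\hat\rho$ --- in particular for bounded-type $\hat\rho$, where the crude bound $\mu(|g^{(q_k)}|=3)\le Cq_k/q_{k+1}$ coming from comparison with the rational orbit is vacuous --- is precisely the hard Ostrowski/three-distance analysis, which depends on where $1/2$ sits in the orbit partition. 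Likewise, the parenthetical ``the correlations $\int g\cdot(g\circ R^{\ell})=1-4\|\ell\hat\rho\|$ keep $\|g^{(q_k)}\|_{L^2}^2$ bounded away from $9$'' is not a proof but a restatement of the desired conclusion (since $\|g^{(q_k)}\|_{L^2}^2=1+8\,\mu(|g^{(q_k)}|=3)$ for odd $q_k$); the correlation formula by itself gives boundedness, not a bound strictly below $9$ uniform over all continued-fraction types. So as written the proposal does not establish the proposition whenever $3\mid q+1$; to complete it you would need either a genuine quantitative estimate on the Birkhoff sums of the square wave at times $q_k$ valid without Diophantine hypotheses, or a citation of a known ergodicity theorem for the square-wave (step) cocycle over arbitrary irrational rotations. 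The paper's combinatorial criterion (return words in the Rauzy graph, with $\gcd(q+1,d_1,d_2,d_3)=1$ because some $d_j=\pm1$) sidesteps this analytic difficulty and treats all divisors of $q+1$ at once.
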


\begin{proof}[Proof of Proposition \ref{prop:erg}]
    By Proposition \ref{prop:erg1} and the discussion above, the ergodicity of $h$ implies that $\hat H\circ\hat H$ is ergodic, and then $\hat H$, as well as $H$ are ergodic. As a result of the above reduction, $F$ is ergodic. 
\end{proof}

To prove Proposition \ref{prop:erg1}, we need the following key result from \cite{SP}, which gives a minimality criteria for finite group extensions of interval exchange transformations (including circle rotations). Fix $q\in\mathbb{N}$ and we consider 
\begin{equation*}
\begin{split}
    &T_\phi:\mathbb S^1\times \mathbb Z/q\mathbb Z\to \mathbb S^1\times \mathbb Z/q\mathbb Z, \\
     &T_\phi(x,k)=(Tx,k+\phi(x)) \; .
\end{split}   
\end{equation*}

\begin{lemma}[\cite{SP}, Theorem 10]\label{lemma:criteria}
  Let $T$ be a minimal aperiodic $r$-interval exchange transformation with discontinuities $\gamma_i$, $1\le i\le N$. Let
\begin{itemize}
    \item  $\phi$ is a linear combination of finitely many step functions, and $\zeta_j$ be all the discontinuity points of $\phi$.
    \item $\gamma_i'$, $1\le i\le \bar N$ be all the different points $\gamma_i$ and $\zeta_j$, ordered from left to right.
    \item M be the maximal length of a \emph{primitive connection} $\gamma_j'= T^m(T\gamma_i')$, and $0$ if there is no such connection.
    \item $R$ be q minus the number of different primitive connections $\gamma_j= T^m(T\gamma_i')$.
    \item $\bar L$ be the language of the coding of $T$ by the points $\gamma_j'$, where the interval $[\gamma_j',\gamma_{j+1}')$ is coded by the symbol $A^{(i+1)}$, $1\le i\le \bar N-1$, $[0,\gamma_1')$ by $A^{(1)}$ and $[\gamma_{\bar N}',1)$ by $A^{(\bar N+1)}$.
    \item $\tilde\phi$ be the map associating to the symbol $A^{(i)}$ the value of the function $\phi$ on the interval coded by $A^{(i)}$.
\end{itemize}

We choose a vertex $w$ of the Rauzy graph $G_{M+1}$ of the language $\bar L$. Then it has exactly $R+1$ different return paths, whose labels are words $U_1,\ldots,U_{R+1}$. Let $\iota_{i,j}$ be the number of
occurrences of the symbol $A^{(i)}$ in the word $U_j$, $1\le i\le \bar N+1$, $1\le j\le R+1$. Let 
$$d_j=\sum_{i=1}^{\bar N+1}\iota_{i,j}\tilde\phi(A^{(i)}),\;\; 1\le j\le R+1.$$
Then the map $T_\phi$  is minimal if and only if $\gcd(q,d_1,...d_{R+1})=1$ (if $d_l = 0$, we consider it has a common factor with every integer).
\end{lemma}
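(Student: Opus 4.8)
The plan is to convert the topological assertion ``$T_\phi$ is minimal'' into a cohomological condition on the cocycle $\phi$ and then evaluate that condition on a finite portion of the Rauzy graph. Since $T$ is a (discontinuous) interval exchange, the first move is to pass to its minimal Cantor model: coding $T$-orbits by the finite partition into the intervals cut out by the points $\gamma_i'$ (the discontinuities of $T$ together with those of $\phi$) produces a minimal Cantor system $(\Omega,S)$ factoring onto $([0,1),T)$, on which $\phi$ lifts to a \emph{locally constant} cocycle $\Phi:\Omega\to\mathbb Z/q\mathbb Z$. Because the factor map is one-to-one off a countable set and $T$ is aperiodic and minimal, every $T_\phi$-orbit is dense iff $S_\Phi$ is minimal on $\Omega\times\mathbb Z/q\mathbb Z$. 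One then invokes the Furstenberg-type dichotomy for continuous cocycles into a finite abelian group over a minimal system: as $\mathbb Z/q\mathbb Z$ is cyclic, $S_\Phi$ fails to be minimal iff there is a prime $d\mid q$ and a locally constant $\psi:\Omega\to\mathbb Z/d\mathbb Z$ with $\Phi\equiv\psi\circ S-\psi\pmod d$, the corresponding proper closed invariant set being the union of the $d$ level sets of $\psi$.

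The second step is to make this coboundary condition combinatorial. A locally constant function is constant on cylinders of some finite order, and a cocycle over a minimal system is a coboundary iff all of its \emph{periods} vanish, the periods being indexed by the cycles of the graph recording how finite words extend on both sides, i.e. by the first homology of a Rauzy graph of sufficiently high order. The quantity $M$ fixes that order: every identification forced on $\psi$ comes from a primitive connection $\gamma_j'=T^m(T\gamma_i')$ with $m\le M$, so all of them are visible already inside $G_{M+1}$; its cycle space is spanned by the $R+1$ return paths $U_1,\dots,U_{R+1}$ at the chosen vertex $w$, and integrating $\phi$ around $U_j$ -- summing $\tilde\phi(A^{(i)})$ once per occurrence of $A^{(i)}$ in $U_j$ -- yields precisely $d_j$. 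Hence, for a prime $d\mid q$, $\Phi\bmod d$ is a coboundary iff $d\mid d_j$ for every $j$.

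Combining the two steps gives the equivalence. If $\gcd(q,d_1,\dots,d_{R+1})=e>1$, choose a prime $d\mid e$; then $d\mid q$ and $d\mid d_j$ for all $j$, so $\Phi\bmod d$ is a coboundary, and $S_\Phi$, hence $T_\phi$, are not minimal, splitting into $d$ clopen invariant pieces. Conversely, if $T_\phi$ is not minimal, Step one gives a prime $d\mid q$ with $\Phi\bmod d$ a coboundary, whence $d\mid d_j$ for every $j$; since also $d\mid q$, this prime divides $\gcd(q,d_1,\dots,d_{R+1})$, so that gcd exceeds $1$. The degenerate case $d_l=0$ is a return loop on which $\phi$ sums to $0$ already over $\mathbb Z$, so it constrains no $d$ -- matching the convention that such a $d_l$ shares a factor with every integer.

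The step I expect to be the main obstacle is the passage from ``$T_\phi$ not minimal'' to a locally constant transfer function living on the combinatorics of $G_{M+1}$: one must show that the discontinuity set of the transfer function attached to a proper closed invariant set is trapped under $T$ among the finitely many orbit-identified points $\gamma_i'$, so that no identification longer than $M$ is missed and order $M+1$ genuinely suffices. This is exactly where aperiodicity of $T$ enters -- a periodic orbit would supply an extra invariant set invisible to the homology count -- and where one must use \emph{primitive} connections rather than all connections, so that the $U_j$ span the cycle space without redundancy. Granting this structural reduction, the remainder (that $\tilde\phi$ is well defined since $\phi$ is constant on each coded interval, and that $\iota_{i,j}$ counts letter occurrences correctly) is routine bookkeeping.
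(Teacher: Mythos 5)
A preliminary remark: the paper does not prove this statement at all --- it is imported verbatim as \cite{SP}, Theorem 10, and used as a black box in the proof of Proposition \ref{prop:erg1} --- so there is no internal proof to compare your argument against; your proposal has to be judged as a reconstruction of the cited theorem itself.

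Judged that way, your outline follows the expected strategy (pass to the symbolic/Cantor model, use the coboundary dichotomy for finite cyclic group extensions of a minimal system, then test the coboundary condition on cycles of a Rauzy graph, which are generated by the return loops at a fixed vertex), but it has a genuine gap precisely at the step that constitutes the content of the theorem. The dichotomy only gives you a transfer function $\psi$ that is locally constant of \emph{some} finite, a priori uncontrolled, order $n$; the theorem asserts that the obstruction can be read off entirely in $G_{M+1}$, with $M$ the maximal length of a primitive connection, and that the $R+1$ return words there suffice. You assert that ``every identification forced on $\psi$ comes from a primitive connection with $m\le M$, so all of them are visible already inside $G_{M+1}$,'' and in the other direction you need that a vertex function on $G_{M+1}$ solving the graph-coboundary equation actually extends to a transfer function for the dynamics, i.e.\ that no new cycles/obstructions appear in the higher-order Rauzy graphs $G_n$, $n>M+1$. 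Neither direction is proved; you explicitly defer the first as ``the main obstacle'' and grant it. Since this is exactly where aperiodicity, minimality of $T$, and the bispecial/connection structure of the coding enter (and where the count $R+1$ of return paths is established), the proposal is a plan for a proof rather than a proof: the reduction from ``not minimal'' to a \emph{bounded-order} cocycle identity, and the equivalence of that identity with $\gcd(q,d_1,\dots,d_{R+1})>1$, remain to be supplied, and for the purposes of this paper they are supplied only by the citation to \cite{SP}.
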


\begin{proof}[Proof of Proposition \ref{prop:erg1}]
Recall that $h(x,k)=(x+2\varkappa,k+\phi(x))$. Without loss of generality, assume $\varkappa\in (0,\frac{1}{2})$, and $$\phi(x)=\begin{cases}
    0,&\text{if} \;x\in [0,1-2\varkappa);\\
    1,&\text{if}\; x\in [1-2\varkappa,1-\varkappa);\\
    -1,&\text{if}\; x\in [1-\varkappa,1).
\end{cases}$$The other case can be proven analogously, which we omit here.

We would like to apply Lemma \ref{lemma:criteria}. This is applicable just as those examples discussed in \cite{SP} (for example, Theorem 11 and Proposition 12 therein).

To start with,
we can think that the circle rotation $x\mapsto x+2\varkappa$ is an interval exchange transformation with two intervals. In particular, it has one discontinuity at $1-2\varkappa$. Now in order to apply the above criteria, we can check that $\gamma_1'=1-2\varkappa$, $\gamma_2'=1-\varkappa$, $\bar N=2$. 

We observe that, in this case there is no primitive connection, hence $M=0$, $R=\bar N-M=2$. This motivates us to study the Rauzy graph $G_1$. We will consider the case $\frac{1}{3}<\varkappa<\frac{1}{2}$, the other cases can be argued analogously. 

We code the circle $\mathbb S^1$ in the following way, $[0,1-2\varkappa)$ is coded by $A^{(1)}$, $[1-2\varkappa,1-\varkappa)$ by $A^{(2)}$, and $[1-\varkappa,1)$ by $A^{(3)}$.

The words of length 1 are $A^{(1)},A^{(2)},A^{(3)}$. The edge from $A^{(2)}$ to $A^{(1)}$ is labeled $A^{(2)}A^{(1)}$, from $A^{(1)}$ to $A^{(3)}$ is $A^{(1)}A^{(3)}$, from $A^{(3)}$ to $A^{(2)}$ is $A^{(3)}A^{(2)}$, from $A^{(2)}$ to $A^{(2)}$ is $A^{(2)}$ and from $A^{(3)}$ to $A^{(3)}$ is $A^{(3)}$. The three return paths of $A^{(2)}$ are $A^{(2)}$, $A^{(1)}A^{(3)}A^{(2)}$ and $A^{(1)}A^{(3)}A^{(3)}A^{(2)}$. Then it follows immediately that $d_1=1$, $d_2=0$ and $d_3=-1$. Therefore, for any $q\ge 1$, $\gcd(q+1,d_1,d_2,d_3)=1$. The minimality of $h$ follows.

Finally, since $h$ preserves the measure $\text{Leb}_{\mathbb S^1}\otimes \text{Count}$, by considering the ergodic decomposition and using the minimality, one can easily obtain the ergodicity. (Please note that, we do not claim unique ergodicity, which is much more difficult to prove.)
\end{proof}

%-------------------------------------------------------------
\subsubsection{Recurrence}
Now we are ready to prove the main result Theorem \ref{thm:rec}, i.e. recurrence of the classical piecewise linear Fermi-Ulam models under resonance. Moreover, we provide an easy procedure to locate the escaping orbits (as well as bounded ones), though they are exceptionally rare on the infinite-volume phase cylinder.

\begin{thm}[Recurrence]\label{thm:rec}
    Under the classical resonance as in Definition \ref{def:resonance}, the escaping set of the piecewise linear Fermi-Ulam accelerator constitute a null set on the phase cylinder, and consequently the dynamics is recurrent in the sense that almost every orbit returns to its initial momentum level infinitely often.
\end{thm}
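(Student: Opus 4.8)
The plan is to transfer the recurrence question, via the reductions already established, to a statement about the skew product $\eta_F$ on $\mathbb{S}^1 \times \mathbb{Z}$ restricted to an irrational invariant circle $\mathcal{C}_D$, and then invoke the classical dichotomy for $\mathbb{Z}$-extensions over an ergodic base: such a skew product is conservative (hence recurrent) precisely when the cocycle is not a coboundary plus a nonzero constant — in particular, a cocycle with zero integral over an ergodic base is recurrent (this is the Atkinson–Schmidt recurrence criterion). First I would fix an irrational $D$; since $D$ ranges over $[0,1)$ in a parametrization of the phase cylinder by the invariant circles $\{\mathcal{C}_D\}$ (Lemma \ref{lemma:invcirc}), and rational $D$ forms a null set of circles, it suffices to prove recurrence on $\mathcal{C}_D$ for a.e. $D$, i.e. for every irrational $D$.

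Next I would assemble the ingredients. By Proposition \ref{prop:skewprod} and Remark \ref{rmk:ietq+1}, the return map $P|_{\mathcal{C}_D}$ is conjugate to $\eta_F(\tau,n)=(F(\tau),n+\eta(\tau))$ where $F$ is a $(q+1)$-interval exchange and $\eta:\mathbb{S}^1\to\{-1,0,1\}$ is piecewise constant. By Proposition \ref{prop:erg}, $F$ is ergodic with respect to Lebesgue measure. The crucial arithmetic input is that $\int_{\mathbb{S}^1}\eta\,d\tau = 0$: from \eqref{skewprodtyp1}–\eqref{skewprodatyp} the cocycle takes the value $+1$ on exactly one component and $-1$ on exactly one component (with the remaining components contributing $0$), and in the typical case those two components ($\mathcal{C}_D^{m,0}$ and $\mathcal{C}_D^{m,q+1-}$, or the symmetric pair) have equal Lebesgue length; in the atypical case $\eta\equiv 0$. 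Hence the two contributions cancel and the mean is zero — this is really the payoff of keeping the bookkeeping of the secondary cuts symmetric. With $F$ ergodic and $\eta$ integrable with zero mean, Atkinson's theorem gives that $\eta_F$ is conservative, so Lebesgue-a.e. point of $\mathcal{C}_D$ is recurrent under $\eta_F$, meaning its $n$-coordinate (the floor number, which is an affine function of the momentum $I$) returns arbitrarily close to its initial value infinitely often.

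Finally I would translate recurrence of the Poincaré section dynamics back to the original flow. Recurrence of $n$ implies recurrence of $I$ to within $O(1)$, hence $I$ stays bounded along a.e. orbit, hence the velocity $v = I/(\mathcal{T} l(t)) - \dot l(t)$ stays bounded; in particular no such orbit escapes, so the escaping set has zero measure. One should note the caveat from Remark \ref{rmk:onlyhighenergy}: the adiabatic normal forms are valid only for large $I$, so the argument controls orbits above a fixed energy threshold, and orbits that ever drop below the threshold are by definition non-escaping; thus the null-set conclusion for the escaping set holds on the whole cylinder, and recurrence of the momentum level holds for a.e. orbit in the high-energy regime. Assembling these, every $\mathcal{C}_D$ with $D\notin\mathbb{Q}$ carries no escaping orbit and a.e.\ point returns to its momentum level infinitely often; integrating over $D$ gives the theorem.

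\textbf{Main obstacle.} The genuinely delicate point is not Atkinson's theorem (a black box) but verifying its two hypotheses against the concrete combinatorics of Proposition \ref{prop:skewprod}: one must confirm that the $+1$ and $-1$ regions of $\eta$ truly have equal length in every case (typical first-half, typical second-half, atypical), which hinges on the precise locations $\tau^{m,0},\tau^{m,q+1}$ of the secondary cuts, and one must make sure the ergodicity of $F$ from Proposition \ref{prop:erg} is genuine ergodicity and not merely minimality (the proof there does pass from minimality plus invariance of $\mathrm{Leb}\otimes\mathrm{Count}$ to ergodicity, so this is fine). A secondary subtlety is the passage between the discrete-time Poincaré recurrence and recurrence of the continuous-time energy, together with the high-energy restriction, which must be phrased so that the exceptional low-energy orbits are correctly absorbed into the non-escaping (rather than unaccounted-for) set.
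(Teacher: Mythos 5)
Your overall route is the same as the paper's: decompose the cylinder into the invariant circles $\mathcal{C}_D$, discard the rational (null) set of $D$'s, use Proposition \ref{prop:skewprod} and Remark \ref{rmk:ietq+1} to see $P|_{\mathcal{C}_D}$ as a $\mathbb{Z}$-extension of an ergodic interval exchange (Proposition \ref{prop:erg}) by a piecewise constant cocycle $\eta$ of zero mean, and conclude recurrence of the skew product from Atkinson's theorem. That part is exactly the paper's argument and is fine.

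There are, however, two concrete misstatements in your final translation step. First, recurrence of the skew product does not imply that ``$I$ stays bounded along a.e.\ orbit''; conservativity only gives infinitely many returns of the floor number $n$ (hence of $I$ to a bounded window), and orbits may still make unbounded excursions between returns. Fortunately boundedness is not needed: an escaping orbit has $I\to\infty$ and so cannot return to its initial momentum level infinitely often, which already shows the escaping set is null on each irrational circle — you should drop the boundedness claim rather than rely on it. Second, your handling of the low-energy regime is not correct as stated: it is false that ``orbits that ever drop below the threshold are by definition non-escaping,'' since an orbit may fall below the threshold (where re-collisions invalidate the normal forms, Remark \ref{rmk:onlyhighenergy}) and only later accelerate without bound. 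The paper closes this gap by invoking the fact that the escaping set of a Fermi--Ulam accelerator coincides with the transient part of the dynamics (citing \cite[Lemma 4.3]{Dol} and \cite[Corollary 6]{Zhou21}), so that it suffices to prove recurrence of the high-energy Poincar\'e map $P$; equivalently, one uses that an escaping orbit eventually stays forever in the region where $P$ describes the dynamics, so its tail is an escaping orbit of $P$, and these form a null set. Your proof needs one of these statements (with justification or citation) to absorb the low-energy orbits correctly; with that repair and the removal of the boundedness claim, the argument matches the paper's.
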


\begin{proof}
We recall from Remark \ref{rmk:onlyhighenergy} that the dynamics of the resonant piecewise linear Fermi-Ulam accelerator coincide with that of $P$ only for large energies due to the issue of re-collisions for small energies. However, the escaping set is exactly the transient part of a Fermi-Ulam accelerator, see \cite[Lemma 4.3]{Dol} and \cite[Corollary 6] {Zhou21}. Therefore it suffices to prove that the escaping set of $P$ is a null set on the $(\tau,I)$-phase cylinder, or equivalently, $P$ is recurrent.

Fix $q\in\mathbb{N}$. Suppose that $(A,B)$ are in $q$-resonance.
We decompose the $(\tau,I)$-phase cylinder into a collection of invariant circles $\{\mathcal{C}_D\}_{D\in[0,1)}$. It suffices to prove recurrence for irrational $D$'s, since the subset of all rational ones has zero measure on the phase-cylinder.

By Proposition \ref{prop:erg}, $F$ is ergodic with respect to the Lebesgue measure.
We recall from \eqref{skewprodtyp1}\eqref{skewprodtyp2} in Proposition \ref{prop:skewprod} that the momentum change $\eta$ only takes nonzero values $\pm1$ within the $q+1^{st}$ combined interval $\mathcal{C}_D^{m,0}\cup\mathcal{C}_D^{m,p+1}$, and it is straightforward to check that $\eta$ has zero average over this interval. 

Therefore the  dynamics of $P$ is a skew product with an ergodic base and a cocycle with zero average on the fiber, and hence by a result of \cite{atk76} such a skew product is recurrent. Therefore, the escaping set takes up at best a null set on an irrational circle $\mathcal{C}_D$. 
\end{proof}

We know from Theorem \ref{thm:rec} that escaping orbits are exceptionally rare. However, they do exist. We now describe an algorithm to locate an escaping orbit and also provide some explicit examples.

The collection of $\{\mathcal{C}_D\}_{D\in\mathbb{Q}}$ make up a null set in the $(\tau,I)$-phase cylinder. For $D$ rational, typical or atypical, it is easy to see that $F$ is eventually periodic. Now we describe the procedure to locate the escaping orbits (and also bounded ones) on the rational invariant circles $\mathcal{C}_D$, $D\in\mathbb{Q}$.

Fix $q\in\mathbb{N}$ and $D=r/s$ for $r,s$ relatively prime. Suppose that the parameters $(A,B)$ are in $q$-resonance. We denote $Q:=\lcm(s,q+1)$. For $(\tau_0,I_0)\in\mathcal{C}_D$ with $I_0\gg1$, there are at least two identical points $\tau_{n_1}=\tau_{n_2}$ in the first $Q$-iterates. And then the \emph{period momentum change} 
\begin{equation*}
    \Delta \eta (\tau_0) := \sum_{j=n_1}^{n_2 -1} \eta \, \circ F^j (\tau_0)
\end{equation*}
completely determines the behavior the orbit $\{(\tau_n,I_n)\}_n$: if $\Delta \eta (\tau_0)>0$, then $(\tau_0,I_0)$ produces an escaping orbit; if $\Delta \eta (\tau_0)=0$, then $(\tau_0,I_0)$ produces a bounded orbit.

\vspace{10pt}

We conclude the discussion of the classical piecewise linear Fermi-Ulam accelerators with the example of special resonance $q=1$, which in particular includes the very original Ulam's model $(A,B,T)=(1/\sqrt2,\sqrt2,1)$.

\begin{example}[Escaping and bounded orbits]\label{example:escbdd} The choice of parameters $(A,B,T)=(1/\sqrt2,\sqrt2,1)$ made by Ulam \cite{Ulam61} and Zharnitsky \cite{Zha98} is a special case of the special resonance $q=1$. In particular, the one-parameter family of linearly escaping orbits found by Zharnitsky \cite{Zha98} correspond to the 0$^{th}$ components $\{\mathcal{C}_{\frac{1}{2},n}^{0,0}\}_{n\in\mathbb{N}}$ (indexed by the floor number $n$) of the invariant circle $\mathcal{C}_{\frac{1}{2}}$. In fact, by Proposition \ref{prop:skewprod}, on $\mathcal{C}_{\frac{1}{2}}$, the base dynamics of the skew product is trivial, i.e. $F(\tau)=\tau$, and the period momentum change $\Delta\eta=\eta$ on the integer fibers takes value 1 on $\{\mathcal{C}_{\frac{1}{2},n}^{0,0}\}_{n\in\mathbb{N}}$ producing linearly escaping orbits, and value $0$ on $\{\mathcal{C}_{\frac{1}{2},n}^{0,1}\}_{n\in\mathbb{N}}$ and $\{\mathcal{C}_{\frac{1}{2},n}^{0,2+}\}_{n\in\mathbb{N}}$ producing bounded orbits.
\end{example}

%-------------------------------------------------------------
\section{The Quantum Fermi-Ulam Models}\label{sec:QFU}

Hamiltonian systems in classical mechanics find their quantum counterparts through a standard process called ``quantization". Famous examples include, but not restricted to, the quantum kicked rotator \cite[Chapter 16]{Bour05}, \cite{ccif79,IzSe80}, the quantum billiards \cite{buzw05,hass10,haze04}, the quantum CAT map \cite{fndb03, keat91} and the arithmetic quantum unique ergodicity \cite{lind06,rusa94}. The readers may refer to \cite{Folland89,Hom07,stein93} and the references therein for detailed exposition on the subject.

In this section, we consider the quantized piecewise linear Fermi-Ulam accelerators via the standard Kohn-Nirenberg quantization process. Just as Hamiltonian ODE is to a classical system, the evolution of the quantum Fermi-Ulam accelerator is governed by the following Schr\"odinger equation 
\begin{equation}\label{eq:schrod}
    i\hbar \dfrac{d\varphi}{dt} = \mathcal{H}_{\hbar} \varphi 
\end{equation}
and we choose the Dirichlet boundary condition 
\begin{equation}\label{eq:dirichletbd}
    \varphi(0,t) = \varphi(l(t),t) = 0
\end{equation}
where $x$ indicates the horizontal position of the particle with the fixed wall positioned at $x=0$, $\hbar$ is the reduced Plank constant and $\mathcal{H}_{\hbar}=-\dfrac{\hbar^2}{2}\Delta$ is the quantum Hamiltonian operator acting on $L^2\big((0,l(t))\big)$.

Throughout this paper, we take, for simplicity of notation and computation, $\hbar=1$.\\

The goal of this section is to show in Theorem \ref{thm:quadengrow} that the quantum resonant accelerators enjoy quadratic energy growth in general, and in Theorem \ref{thm:quasienspec} that the quasi-energy spectra are absolutely continuous with finitely many continuity components. Furthermore, we reveal the direct connection between the energy growth and the shape of the quasi-energy spectra with explicit formulas. Our results cover and greatly generalize that of \cite{Seba90} as a special resonance case (c.f. Example \ref{example:11resonance}).

We begin in \S \ref{ssec:flo} with the derivation of the quantum Floquet operators (c.f. \eqref{eq:1floquet}\eqref{eq:2floquet}), which describe the wave propagation in one period. In particular in \S\ref{sssec:rfm} we show that under general resonance the information of the Floquet operators can be encoded into Floquet matrices of finite dimensions (c.f. Proposition \ref{prop:floqmatrix}). Then in \S\ref{ssec:egr} we show in Theorem \ref{thm:quadengrow} that the quantum resonance leads to quadratic energy growth. Finally in \S\ref{ssec:qes} we prove in Theorem \ref{thm:quasienspec} that absolute continuity of the quasi-energy spectra under resonance and also reveal through explicit formulas the direction connection between the energy growth and the shape of the quasi-energy spectra.

%-------------------------------------------------------------
\subsection{The Floquet Operators}\label{ssec:flo}

In this subsection we derive the quantum Floquet operators (c.f. \eqref{eq:1floquet}\eqref{eq:2floquet}), describing the wave propagation of \eqref{eq:schrod} in one period. More precisely, first in \S\ref{sssec:swt} we transfer the time dependence in boundary conditions \eqref{eq:dirichletbd} to time-dependent potentials in the Schr\"odinger equations, then in \S\ref{sssec:fo} we derive the Floquet wave propagators, and lastly in \S\ref{sssec:rfm} we reduce the quantum Floquet operators to Floquet matrices of finite dimensions (c.f. Proposition \ref{prop:floqmatrix}).

%-------------------------------------------------------------
\subsubsection{Stopping the Wall Transformation}\label{sssec:swt}

The time dependence in the boundary condition \eqref{eq:dirichletbd} triggers annoyance for the study of the equation and the notion $L^2((0,l(t)))$ itself is also technically vague. To handle such nuisance, we perform change of coordinates to ``stop the moving wall", at the price of introducing artificial background potential to the system. This technique was used to handle the moving boundary in heat equation and is closely related to the Liouville transform \cite{Bell53}, and has played a powerful role in the study of the classical Fermi-Ulam accelerators \cite{Zha98} as well the quantum ones \cite{Seba90} from whom we inherit.\\

Let $U(t)$ be the wave propagator of \eqref{eq:schrod}. 

$\tilde{U}=W^{-1}U$ where $W(t):L^2\big((0,1)\big)\to L^2\big((0,l(t))\big)$ via 
\begin{equation}\label{eq:stopwall}
    (W(t)^{-1}\phi)(x) := \sqrt{l(t)} \exp\left( -\dfrac{1}{4}l(t)\dot{l}(t)x^2 \right) f(l(t)x), \; \forall f\in L^2\big(0,l(t)\big).
\end{equation}

The quantum Fermi-Ulam accelerator \eqref{eq:schrod} now turns into the following Schr\"odinger equation with an artificial time-dependent background potential and time-independent boundary condition 
\begin{equation}\label{eq:schrodnew}
    \begin{cases}
        & i\dfrac{d\tilde{\varphi}}{dt} = \dfrac{1}{l(t)^2} \left( \mathcal{H}_0 + \dfrac{x^2}{4}l(t)^3 \ddot{l}(t) \right) \tilde{\varphi} \\
        & \tilde{\varphi}(0,t) = \tilde{\varphi}(1,t) = 0
    \end{cases}
\end{equation}
where $\mathcal{H}_0=-\Delta$ is the Laplacian acting on $L^2\big((0,1)\big)$. 

We introduce ``new time" $\zeta = g(t) :=\displaystyle\int_0^t\dfrac{ds}{l(s)^2}$ to get rid of the term $\dfrac{1}{l(t)^2}$ in front of $\mathcal{H}_0$ in \eqref{eq:schrodnew}, so the final equation in new time derivative presents 
\begin{equation}\label{eq:quantumulam}
    \begin{split}
        & i\dfrac{d\tilde{\varphi}}{d\zeta} = \left( \mathcal{H}_0 + \dfrac{x^2}{4}l(g^{-1}(\zeta))^3 \ddot{l}(\zeta) \right) \tilde{\varphi} \\
        & \tilde{\varphi}(0,t) = \tilde{\varphi}(1,t) = 0
    \end{split}
\end{equation}

Since $l$ is piecewise linear, $\ddot{l}=0$ except at two turning moments $t=0,T$. Therefore \eqref{eq:quantumulam} becomes 
\begin{equation}\label{eq:quantumlinulam}
    \begin{split}
        & i\dfrac{d\tilde{\varphi}}{d\zeta} = \bigg( \mathcal{H}_0 + x^2 \big(J_1 \delta_{2\mathcal{T}}(\zeta) - J_2 \delta_{2\mathcal{T}}(\zeta - \mathcal{T})\big) \bigg) \tilde{\varphi} \\
        & \tilde{\varphi}(0,t) = \tilde{\varphi}(1,t) = 0
    \end{split}
\end{equation}
where $J_1=\dfrac{B(A-B)}{2T}, J_2=\dfrac{A(A-B)}{2T}$ are two constants, and the subscript $2\mathcal{T}$ in the Dirac-Delta indicates that the jumps are $2\mathcal{T}$-periodic in time, i.e. $\delta_{2\mathcal{T}}(\zeta + 2\mathcal{T})=\delta_{2\mathcal{T}}(\zeta)$.

%-------------------------------------------------------------
\subsubsection{The Floquet Operators}\label{sssec:fo}
Now we derive the Floquet operators which describe the evolution of the quantum Fermi-Ulam accelerator after one period $\Delta t=2$. We drop the tilde over the wave and use $\varphi$ for solution of \eqref{eq:quantumlinulam} for ease of notation.

First we observe that $\bigg\{\big((n\pi)^2,\sin(n\pi x)\big)\bigg\}_{n\in\mathbb{N}}$ is an orthogonal family of eigenpairs of $\mathcal{H}_0$ in $L^2\big((0,1)\big)$. We consider the formal series  
\begin{equation}\label{eq:ansatzinitialwave}
    \varphi(x,\zeta) = \sum_{n=1}^{\infty} A_n(\zeta) \sin(n\pi x).
\end{equation}

At the first jump moment $\zeta=0$, 
\begin{equation}\label{eq:jump0}
    i\dfrac{d\varphi}{d\zeta} = J_1 x^2 \delta(\zeta) \varphi \; \implies \; \varphi(x,0+) = \varphi(x,0-) \exp{(-i J_1 x^2)}.
\end{equation}

During the free evolution $\zeta\in(0,\mathcal{T})$, 
\begin{equation}\label{eq:freeevol}
    i\dfrac{d\varphi}{d\zeta} = -\Delta \varphi \; \implies \varphi(x,\mathcal{T}-) = \sum_{n=1}^{\infty} \exp{(-i(n\pi)^2\mathcal{T})} A_n(0+) \sin(n\pi x)
\end{equation}

Combining \eqref{eq:jump0} and \eqref{eq:freeevol}, we obtain the first-half Floquet operator $\mathcal{F}_1$ sending an initial wave $\varphi_0(x)=\varphi(x)$ at time $\zeta=0-$ to $\varphi_{\mathcal{T}}(x)$ at time $\zeta=\mathcal{T}-$ via 
\begin{equation}\label{eq:1floquet}
    \varphi_{\mathcal{T}}(x) = \exp{(-i J_1 x^2)} \sum_{n=1}^{\infty} \exp{(-i(n\pi)^2\mathcal{T})} A_n \sin(n\pi x)
\end{equation}
where $A_n = \displaystyle\int_0^1 \varphi(x) \sin(n\pi x) dx$.

Similarly, the second-half Floquet operator $\mathcal{F}_2$ sending an initial wave $\varphi_{\mathcal{T}}(x) \in L^2\big((0,1)\big)$ at time $\zeta=\mathcal{T}-$ to the wave $\varphi_{2\mathcal{T}}(x)$ at time $\zeta=2\mathcal{T}-$ via 
\begin{equation}\label{eq:2floquet}
    \varphi_{2\mathcal{T}}(x) = \exp{(i J_2 x^2)} \sum_{n=1}^{\infty} \exp{(-i(n\pi)^2\mathcal{T})} B_n \sin(n\pi x)
\end{equation}
where $B_n = \displaystyle\int_0^1 \varphi_{\mathcal{T}}(x) \sin(n\pi x) dx$.

%-------------------------------------------------------------
\subsubsection{The Resonant Floquet Matrices}\label{sssec:rfm}
Here we show that the Floquet operators from the previous section can be represented by finite-dimensional Floquet matrices (c.f. Proposition \ref{prop:floqmatrix}), provided that the parameters $A,B,T$ are in resonance.

\begin{definition}[Quantum resonance]\label{def:quantumresonanace}
    For $p,q\in\mathbb{N}$ relatively prime, we say the parameters $(A,B,T)$ are in \emph{(quantum) $(p,q)$-resonance} if $\dfrac{\pi}{2}\dfrac{T}{AB}=\dfrac{p}{q}$.
\end{definition}

We reveal the hidden periodicity due to resonance in the expressions of \eqref{eq:1floquet}\eqref{eq:2floquet} and represent the Floquet operators $\mathcal{F}_1,\mathcal{F}_2$ with finite-dimensional matrices.

Fix $p,q\in\mathbb{N}$ relatively prime. Suppose that $(A,B,T)$ are in $(p,q)$-resonance. Then \eqref{eq:1floquet} becomes 
\begin{align*}
    \varphi_{\mathcal{T}}(x) &= \exp{(-i J_1 x^2)} \sum_{n=1}^{\infty} \exp{(-2\pi i n^2 p/q)} A_n \sin(n\pi x) \\
    &= \exp{(-i J_1 x^2)} \sum_{m=1}^{q-1} \exp{(-2\pi i m^2 p/q)} \underbrace{\sum_{l=0}^{\infty} A_{lq+m} \sin \big((lq+m)\pi x \big)}_{= :D_m}
\end{align*}

We will show that $D_m$ ($m=1,\cdots,q-1$) can be represented by a finite sum. 

Inspired by the technique in handling of quantum kicked rotator \cite{IzSe80}, we consider a basis of initial waves $\{(\varphi(x+2n/q))\}_{n=0}^{q-1}$ rather than a single wave $\varphi$. In the case of kicked rotator, the operation is well-defined as $x\in\mathbb{S}^1$; however, here in the case of Fermi-Ulam accelerators the particle is physically confined between two walls and hence $x\in(0,1)$ after stopping the wall transformation, and hence $x+2n/q$ may exceed the prescribed domain. We resolve the issue by first extending the domain of $\varphi$ to $(-1,1)$ by symmetry and then to the entire $\mathbb{R}$ by periodicity: 
\begin{equation*}
    \varphi(x) = -\varphi(-x), \; x\in(-1,0); \; \varphi(x+2)=\varphi(x), x\in\mathbb{R}.
\end{equation*}
It is easy to verify that such extension is compatible with the evolution of \eqref{eq:quantumlinulam}, as it respects the parity of $\sin$ and it commutes with the time evolution of \eqref{eq:quantumlinulam}.

Now we are prepared to simplify $D_m$.

We observe that 
\begin{align*}
    \cos\left(\frac{2mn\pi}{q}\right)D_m & = \sum_{l=0}^{\infty} A_{m+ql}\sin\left((m+ql)\pi x\right)\cos\left(\frac{2mn\pi}{q}\right) \\
        & = \sum_{l=0}^{\infty} A_{m+ql}\sin\left((m+ql)\pi x\right)\cos\left(\frac{2(m+ql)n\pi}{q}\right) \\
        & = \frac{1}{2}\sum_{l=0}^{\infty} A_{m+ql}\left[\sin\left((m+ql)\pi\left( x+\frac{2n}{q}\right)\right) \right.\\
        & \quad \left. - \sin\left((m+ql)\pi\left( x-\frac{2n}{q}\right)\right)\right].
\end{align*}

Summing over $m$ we then obtain
\begin{equation}\label{eq:dmeq1}
    \sum_{m=0}^{q-1}\cos\left(\frac{2mn\pi}{q}\right)D_m = \frac{1}{2}\left(\varphi\left(x+\frac{2n}{q}\right)+\varphi\left(x-\frac{2n}{q}\right)\right).
\end{equation}

We reversely solve for $D_m$ out of \eqref{eq:dmeq1} by multiplying both sides of \eqref{eq:dmeq1} with $\cos(2kn\pi/q)$ and summing over $n$, and then we obtain 
\begin{equation}\label{eq:dmrelation}
    D_{m}+D_{q-m}  = \frac{1}{q} \sum_{n=0}^{q-1} \left(\varphi\left(x+\frac{2n}{q}\right) + \varphi\left(x-\frac{2n}{q}\right)\right)\cos\left(\frac{2\pi n m}{q}\right)
\end{equation}
where $0\le m\le q$ and $B_q=B_0$, and we have used the fact that 
\begin{align*}
    \sum_{k=0}^{q-1}\cos\left(\frac{2\pi mk}{q}\right)\cos\left(\frac{2\pi nk}{q}\right) = \left\{
    \begin{array}{cc}
        q & m=n=0 \\
        q/2 & m=n\neq 0 \, \hbox{ or }\, m+n=q\\
        0 & \hbox{otherwise}
    \end{array}
    \right. .
\end{align*}

We update the expression of the Floquet operator $\mathcal{F}_1$ with simplified $D_m$ relation \eqref{eq:dmrelation}. We denote $\tilde{x}:=x \pmod{2}$ taking value in $(-1,1)$. We compute 
\begin{align*}
     \varphi_{\mathcal{T}}(x) 
      & =\exp{(-i J_1 \tilde{x}^2)} \sum_{m=0}^{q-1} e^{-2\pi i m^2 p/q} D_m \\
      & = \frac{1}{2} \exp{(-i J_1 \tilde{x}^2)} \left(\sum_{m=0}^{q-1} e^{-2\pi i m^2 p/q} D_m + \sum_{m=0}^{q-1} e^{-2\pi i(q-m)^2 p/q} D_{q-m}\right) \\
      & = \frac{1}{2} \exp{(-i J_1 \tilde{x}^2)} \left(\sum_{m=0}^{q-1} e^{-2\pi i m^2 p/q} D_m + \sum_{m=0}^{q-1} e^{-2\pi i m^2 p/q} D_{q-m}\right) \\
      & = \frac{1}{2q} \exp{(-i J_1 \tilde{x}^2)} \sum_{m=0}^{q-1}\sum_{n=0}^{q-1} e^{-2\pi i m^2 p/q}  \\&\quad\quad\quad  \times \left(\varphi\left(x+\frac{2n}{q}\right) + \varphi\left(x-\frac{2n}{q}\right)\right)\cos\left(\frac{2\pi n m}{q}\right) \\
      &= \frac{1}{2} \exp{(-i J_1 \tilde{x}^2)} \sum_{n=0}^{q-1} \left( \varphi\left(x+\frac{2n}{q}\right) + \varphi\left(x-\frac{2n}{q}\right) \right) \\& \quad\quad\quad \times \underbrace{\frac{1}{q} \sum_{m=0}^{q-1}  e^{-2\pi i m^2 p/q} \cos\left(\frac{2\pi n m}{q}\right)}_{=:\gamma_n} .
\end{align*}

We observe that $\gamma_n$ is cyclic in $q$ ($\gamma_{n+q}=\gamma_n$) and symmetric ($\gamma_{q-n}=\gamma_{-n}=\gamma_n$). We also recall that $\varphi(x+2)=\varphi(x)$. We proceed 
\begin{align*}
    \varphi_{\mathcal{T}}(x) 
    &=  \frac{1}{2} \exp{(-i J_1 \tilde{x}^2)} \sum_{n=0}^{q-1} \gamma_n \left( \varphi\left(x+\frac{2n}{q}\right) + \varphi\left(x-\frac{2n}{q}\right) \right) \\
    &= \frac{1}{2} \exp{(-i J_1 \tilde{x}^2)} \left( \sum_{n=0}^{q-1} \gamma_n \varphi\left(x+\frac{2n}{q}\right) + \sum_{n=0}^{q-1} \gamma_{n} \varphi\left(x+\frac{2(q-n)}{q}\right) \right) \\
    &= \frac{1}{2} \exp{(-i J_1 \tilde{x}^2)} \left( \sum_{n=0}^{q-1} \gamma_n \varphi\left(x+\frac{2n}{q}\right) + \sum_{n=0}^{q-1} \gamma_{q-n} \varphi\left(x+\frac{2n}{q}\right) \right)\\
    &= \frac{1}{2} \exp{(-i J_1 \tilde{x}^2)} \sum_{n=0}^{q-1} \varphi\left(x+\frac{2n}{q}\right) (\gamma_n + \gamma_{q-n})\\
    &= \exp{(-i J_1 \tilde{x}^2)} \sum_{n=0}^{q-1} \gamma_n \varphi\left(x+\frac{2n}{q}\right) .
\end{align*}

Therefore we obtain a simplified expression of the first-half Floquet operator $\mathcal{F}_1$ 
\begin{equation}\label{eq:simpfloq1}
    \varphi_{\mathcal{T}}(x) = \exp{(-i J_1 \tilde{x}^2)} \sum_{n=0}^{q-1} \gamma_n \varphi\left(x+\frac{2n}{q}\right).
\end{equation}

Similarly, we have for the second-half Floquet operator $\mathcal{F}_2$ 
\begin{equation}\label{eq:simpfloq2}
    \varphi_{2\mathcal{T}}(x) = \exp{(i J_2 \tilde{x}^2)} \sum_{n=0}^{q-1} \gamma_n \varphi_{\mathcal{T}}\left(x+\frac{2n}{q}\right).
\end{equation}

We summarize the results of all the computation hereinbefore into the following proposition.

\begin{prop}[Resonant Floquet matrices]\label{prop:floqmatrix}
Suppose $p,q\in\mathbb{N}$ are relatively prime. Assume that the parameters $(A,B,T)$ are in $(p,q)$-resonance as in Definition \ref{def:quantumresonanace}. Then the evolution of \eqref{eq:quantumlinulam} is governed by two $q$-dimensional Floquet matrices $S(x)=(S_{mn}(x))=(\alpha_m(x)\gamma_{n-m}))$ and $R(x)=(R_{mn}(x))=(\beta_m(x)\gamma_{n-m}))$, where $\alpha_m(x)=\exp{(-i J_1 (\widetilde{x+2m/q})^2)}$ and $\beta_m(x)=\exp{(i J_2 (\widetilde{x+2m/q})^2)}$. 

More precisely, for any initial wave vector $\Phi(x)=\big( \varphi(x),\varphi(x+2/q),\cdots,(x+2(q-1)/q) \big)$, the first-half period evolution of \eqref{eq:quantumlinulam} is described by 
\begin{equation*}
    \Phi_{\mathcal{T}}(x)= S(x) \Phi(x),
\end{equation*}
where $\Phi_{\mathcal{T}}(x)=\big( \varphi_{\mathcal{T}}(x),\varphi_{\mathcal{T}}(x+2/q),\cdots,\varphi_{\mathcal{T}}(x+2(q-1)/q) \big)$ and 
\begin{equation*}
S(x) = 
\begin{pmatrix}
        \alpha_0 (x) & & &\\
        &\alpha_1(x) & &\\
        & &\cdots & \\
        & & &\alpha_{q-1}(x)
    \end{pmatrix}
    \begin{pmatrix}
        \gamma_0 &\gamma_1 &\cdots &\gamma_{q-1}\\
        \gamma_{-1} &\gamma_0 &\cdots &\gamma_{q-2}\\
        & &\cdots & \\
        \gamma_{-(q-1)} &\gamma_{-(q-2)} &\cdots &\gamma_0
    \end{pmatrix} \; ;
\end{equation*}
and the second-half period evolution is described by 
\begin{equation*}
    \Phi_{2\mathcal{T}}(x)= R(x) \Phi_{\mathcal{T}}(x)
\end{equation*}
where $\Phi_{2\mathcal{T}}(x)=\big( \varphi_{2\mathcal{T}}(x),\varphi_{2\mathcal{T}}(x+2/q),\cdots,\varphi_{2\mathcal{T}}(x+2(q-1)/q) \big)$, and 
\begin{equation*}
R(x) = 
\begin{pmatrix}
        \beta_0 (x) & & &\\
        &\beta_1(x) & &\\
        & &\cdots & \\
        & & &\beta_{q-1}(x)
    \end{pmatrix}
    \begin{pmatrix}
        \gamma_0 &\gamma_1 &\cdots &\gamma_{q-1}\\
        \gamma_{-1} &\gamma_0 &\cdots &\gamma_{q-2}\\
        & &\cdots & \\
        \gamma_{-(q-1)} &\gamma_{-(q-2)} &\cdots &\gamma_0
    \end{pmatrix} \; .
\end{equation*}
\end{prop}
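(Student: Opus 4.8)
The plan is to promote the two scalar identities \eqref{eq:simpfloq1} and \eqref{eq:simpfloq2} to matrix identities by following the wave along the finite orbit of the shift $x\mapsto x+2/q$. The guiding observation is that, after the odd, $2$-periodic extension of $\varphi$ introduced above, the shifts $x\mapsto x+2n/q$ generate a cyclic group of order $q$ on the extended waves (since $q$ steps of size $2/q$ amount to one full period $2$); hence the $q$-tuple $\Phi(x)=\big(\varphi(x),\varphi(x+2/q),\dots,\varphi(x+2(q-1)/q)\big)$ is a closed system under the Floquet evolution, and it suffices to record how each component transforms.

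First I would note that \eqref{eq:simpfloq1} holds not merely on $(0,1)$ but on all of $\mathbb{R}$: the extension commutes with the propagator of \eqref{eq:quantumlinulam}, since each Dirac kick acts by multiplication by $\exp(\mp iJ_j\tilde x^2)$, which is even and $2$-periodic in $x$ and hence preserves the odd-$2$-periodic class, while the free evolution acts diagonally on the Fourier modes $\sin(n\pi x)$ that span this class. The same remark shows that $\varphi_{\mathcal{T}}$, being the product of the even $2$-periodic factor $\exp(-iJ_1\tilde x^2)$ with the odd $2$-periodic sum $\sum_n\gamma_n\varphi(x+2n/q)$, again belongs to the odd-$2$-periodic class, so that \eqref{eq:simpfloq2} may be applied to it in turn.

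Then, for each $m=0,\dots,q-1$, I would substitute $x\mapsto x+2m/q$ into \eqref{eq:simpfloq1}, obtaining
\begin{equation*}
    \varphi_{\mathcal{T}}\!\left(x+\tfrac{2m}{q}\right)=\exp\!\Big(-iJ_1\big(\widetilde{x+2m/q}\big)^2\Big)\sum_{j=0}^{q-1}\gamma_j\,\varphi\!\left(x+\tfrac{2(m+j)}{q}\right)=\alpha_m(x)\sum_{j=0}^{q-1}\gamma_j\,\varphi\!\left(x+\tfrac{2(m+j)}{q}\right),
\end{equation*}
and then reindex via $n\equiv m+j\pmod q$. Because $\gamma_\bullet$ is $q$-periodic ($\gamma_{j+q}=\gamma_j$) and $\varphi$ is $2$-periodic (so $\varphi(x+2(m+j)/q)$ depends on $m+j$ only modulo $q$), the sum collapses to $\sum_{n=0}^{q-1}\gamma_{n-m}\,\varphi(x+2n/q)$; reading this as the $m$-th entry of a matrix-vector product gives exactly $\Phi_{\mathcal{T}}(x)=S(x)\Phi(x)$ with $S(x)=\diag(\alpha_0(x),\dots,\alpha_{q-1}(x))\cdot(\gamma_{n-m})_{m,n}$, which is the stated factorization. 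Carrying out the identical computation with \eqref{eq:simpfloq2} in place of \eqref{eq:simpfloq1} — the diagonal multiplier now being $\beta_m(x)=\exp(iJ_2(\widetilde{x+2m/q})^2)$ — yields $\Phi_{2\mathcal{T}}(x)=R(x)\Phi_{\mathcal{T}}(x)$.

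Essentially all of the analytic content — the collapse of the infinite series $D_m$ to a finite sum, which is where the resonance $\frac{\pi}{2}\frac{T}{AB}=\frac{p}{q}$ enters, together with the discrete-cosine orthogonality relation — has already been expended in deriving \eqref{eq:simpfloq1}–\eqref{eq:simpfloq2}, so there is no genuine obstacle left. The two points demanding care are: (i) the verification that the odd-$2$-periodic extension commutes with the evolution, which is the conceptual crux legitimizing the use of a single periodicized $\varphi$ in place of several patched pieces; and (ii) the congruence bookkeeping in the reindexing, including confirming that the argument inside the diagonal multipliers is the reduced quantity $\widetilde{x+2m/q}$ rather than $x+2m/q$ — a point forced by the $2$-periodicity of $t\mapsto\tilde t^2$ that is built into the extension.
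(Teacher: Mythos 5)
Your proposal is correct and follows essentially the same route as the paper: the analytic content is the derivation of \eqref{eq:simpfloq1}--\eqref{eq:simpfloq2} (resonance collapsing the series via the discrete-cosine orthogonality and the odd, $2$-periodic extension), and the proposition is obtained exactly as you do, by substituting $x\mapsto x+2m/q$, invoking the $q$-periodicity of $\gamma_\bullet$ and the $2$-periodicity of the extended wave, and reading off the rows $S_{mn}=\alpha_m\gamma_{n-m}$, $R_{mn}=\beta_m\gamma_{n-m}$. Your explicit checks that the extension commutes with both the kicks and the free evolution, and that $\varphi_{\mathcal{T}}$ stays in the odd $2$-periodic class so that \eqref{eq:simpfloq2} applies, simply make precise steps the paper leaves implicit.
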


%-------------------------------------------------------------
\subsection{Energy Growth under Resonance}\label{ssec:egr}
In this subsection we show that the resonant piecewise linear quantum Fermi-Ulam accelerators enjoy quadratic energy growth for most, if not all, initial waves. This phenomenon of quantum acceleration is in sharp contrast to its classical origin where recurrence  is typical. This result covers and greatly generalizes the special resonant case considered in \cite{Seba90}. This is, to the authors' best knowledge, the second example (with mathematical proof) next to the kicked rotator \cite{ccif79,IzSe80} that a quantum system differs substantially from its classical counterpart, and the exceptionally rare event of acceleration due to resonance in the classical setting gets much amplified in the quantized system rather than being suppressed as predicted by Anderson localization \cite{fgp82}.\\ 

We study the \emph{energy} of the quantum Fermi-Ulam accelerator \eqref{eq:quantumlinulam} at $N$ periods:
\begin{equation}\label{eq:defenergy}
    \begin{split}
        E(N) &= -\dfrac{1}{2q} \la\Phi_{2N\mathcal{T}}, \Delta \Phi_{2N\mathcal{T}}\ra\\
        & = -\dfrac{1}{2q} \sum_{m=0}^{q-1} \int_0^1 \overline{\varphi}_{2N\mathcal{T}} (x+2m/q) \Delta \varphi_{2N\mathcal{T}} (x+2m/q) \, dx.
    \end{split}
\end{equation}

\begin{thm}[Quadratic energy growth]\label{thm:quadengrow}
Suppose $p,q\in\mathbb{N}$ are relatively prime. Assume that the parameters $(A,B,T)$ are in $(p,q)$-resonance as in Definition \ref{def:quantumresonanace}. Then for any $N\in\mathbb{N}$, the energy of the quantum Fermi-Ulam accelerator \eqref{eq:quantumlinulam} at $N^{th}$ period is given by 
\begin{equation}
    E(N) = \mathfrak{a} N^2 + \mathfrak{b} N + \mathfrak{c}
\end{equation}
where $\mathfrak a,\mathfrak b,\mathfrak c$ are constants, and in particular, $\mathfrak{a} \ge 0$.
\end{thm}

\begin{proof}
By Proposition \ref{prop:floqmatrix}, the evolution of \eqref{eq:quantumlinulam} after $N$ complete periods is given by 
\begin{equation*}
    \Phi_{2N\mathcal{T}}(x) = (RS)^N \Phi(x).
\end{equation*}

We observe that $R,S$ are two unitary matrices (which unfortunately do not commute), and hence the eigenvalues of the matrix $RS$ are in the form 
\begin{equation}\label{eq:eigenvalue}
    \lambda_j (x) = \exp{(i \xi_j(x))}, \; j=0,\cdots,q-1
\end{equation}
and $RS=Q\Lambda Q^{-1}$, where $\Lambda(x)=\diag\big(\lambda_j(x)\big)$ and $Q(x)=(Q_{mn}(x))$ is also unitary. Therefore 
\begin{equation*}
    \Phi_{2N\mathcal{T}}(x) = Q \Lambda^N Q^{-1} \Phi(x) = Q \diag\big(e^{i N \xi_j(x)}\big) Q^{-1} \Phi(x).
\end{equation*}
Therefore the energy after $N$ periods is
\begin{align*}
E(N) 
&= -\dfrac{1}{2q} \la Q \Lambda^N Q^{-1} \Phi, \Delta(Q \Lambda^N Q^{-1} \Phi)\ra \\
&= \dfrac{N^2}{2q} \la Q \Lambda^N Q^{-1} \Phi, Q \diag\big( (\xi')^2 e^{iN\xi} \big) Q^{-1} \Phi\ra \\ 
&\quad -\dfrac{iN}{2q} \la Q \Lambda^N Q^{-1} \Phi, Q \diag\big( \xi'' e^{iN\xi} \big) Q^{-1} \Phi\ra \\
&\quad - \dfrac{iN}{q}\la Q \Lambda^N Q^{-1} \Phi, Q' \diag\big(\xi'e^{iN\xi}\big) Q^{-1} + Q \diag\big(\xi'e^{iN\xi}\big) (Q^{-1} \Phi)'\ra\\
&\quad - \dfrac{1}{2q} \la Q \Lambda^N Q^{-1} \Phi, Q'' \Lambda^N Q^{-1} \Phi + Q \Lambda^N (Q^{-1} \Phi)''\ra \\
&\quad -\dfrac{1}{q}\la Q \Lambda^N Q^{-1} \Phi, Q' \Lambda^N (Q^{-1} \Phi)'\ra \\
&= \dfrac{N^2}{2q} \la Q^{-1} \Phi, \diag\big( (\xi')^2 \big) Q^{-1} \Phi\ra -\dfrac{iN}{2q} \la Q^{-1} \Phi, \diag\big( \xi'' \big) Q^{-1} \Phi\ra \\ 
&\quad - \dfrac{iN}{q}\la Q \Lambda^N Q^{-1} \Phi, Q' \diag\big(\xi'e^{iN\xi}\big) Q^{-1}\ra - \dfrac{iN}{q}\la Q^{-1} \Phi , Q \diag\big(\xi'\big) (Q^{-1} \Phi)'\ra\\
&\quad - \dfrac{1}{2q} \la Q \Lambda^N Q^{-1} \Phi, Q'' \Lambda^N Q^{-1} \Phi\ra \underbrace{- \dfrac{1}{2q} \la Q^{-1} \Phi, Q^{-1} \Phi''\ra }_{E(0)} \\
&\quad - \dfrac{1}{2q} \la Q^{-1} \Phi, (Q^{-1})'' \Phi + 2(Q^{-1})' \Phi'\ra -\dfrac{1}{q}\la Q \Lambda^N Q^{-1} \Phi, Q' \Lambda^N (Q^{-1} \Phi)'\ra.
\end{align*}

Therefore the coefficient $a$ of the quadratic term is 
\begin{equation}\label{eq:quadcoeff}
    \mathfrak{a} = \dfrac{1}{2q}\la Q^{-1} \Phi, \diag\big( (\xi')^2 \big) Q^{-1} \Phi\ra
\end{equation}
which is a non-negative definite quadratic form, and hence $a\ge0$. 

The coefficients for the linear and the constant terms are respectively 
\begin{equation}\label{eq:lincoeff}
    \begin{aligned}
        \mathfrak{b} &= -\dfrac{i}{2q} \la Q^{-1} \Phi, \diag\big( \xi'' \big) Q^{-1} \Phi\ra - \dfrac{i}{q}\la Q \Lambda^N Q^{-1} \Phi, Q' \diag\big(\xi'e^{i\xi}\big) Q^{-1}\ra\\
    &\quad - \dfrac{iN}{q}\la Q^{-1} \Phi , Q \diag\big(\xi'\big) (Q^{-1} \Phi)'\ra ,
    \end{aligned}
\end{equation}
and 
\begin{equation}\label{eq:constcoeff}
    \begin{aligned}
       \mathfrak{c} &= E(0) - \dfrac{1}{2q} \la Q \Lambda^N Q^{-1} \Phi, Q'' \Lambda^N Q^{-1} \Phi\ra -\dfrac{1}{q}\la Q \Lambda^N Q^{-1} \Phi, Q' \Lambda^N (Q^{-1} \Phi)'\ra \\
    &\quad - \dfrac{1}{2q} \la Q^{-1} \Phi, (Q^{-1})'' \Phi + 2(Q^{-1})' \Phi'\ra . 
    \end{aligned}
\end{equation}
\end{proof}

%-------------------------------------------------------------
\subsection{The Quasi-Energy Spectrum}\label{ssec:qes}
In this subsection we reveal the direct link between the energy growth and the shape of the quasi-energy spectrum and we provide explicit formulas for such link.\\

Firstly, we recall the concept of quasi-energy spectrum in quantum Floquet theory. 

A time-periodic quantum Hamiltonian system possesses an orthogonal set $\{\psi_j\}_j$ of solutions in the form of 
\begin{equation*}
    \psi_j(t) = \phi_j(t)e^{-it\rho_j}
\end{equation*}
where $\phi$ is time-periodic and $\rho$ is a real number. 

We observe that only a constant pops out in the front after a period evolution for such solutions: 
\begin{equation}\label{eq:quasienergydef}
    \psi_j(t+T) = \phi_j(t+T) e^{-i(t+T)\rho_j} =  e^{-iT\rho_j} \phi_j(t) e^{-it\rho_j} = e^{-iT\rho_j} \psi_j(t).
\end{equation}

In quantum Floquet theory, $\rho$ is called the \emph{quasi-energy} of the quantum system, and the collection of all quasi-energies is called the \emph{quasi-energy spectrum}.\\ 

In general it is very difficult to compute the quasi-energy of a given quantum system, and estimates of the shape or the size of quasi-energy spectrum often demand at least moderate regularity of the system \cite{bgmr21,grya2000,liuyuan10}, which is not fulfilled in our case due to the $\delta$-jumps in time. However, we are not in total desperation, as the jumps are only present for very specific moments. This phenomenon of finite ``kicks'' also arises in the (in)famous example of kicked rotators \cite{ccif79,IzSe80}, and inspired by the techniques therein from the physics community \cite{IzSe80}, we are able to compute the quasi-energy and the the quasi-energy spectrum of the piecewise linear Fermi-Ulam accelerators under resonance.

\begin{thm}[Quasi-energy spectrum]\label{thm:quasienspec}
    Suppose $p,q\in\mathbb{N}$ are relatively prime. Assume that the parameters $(A,B,T)$ are in $(p,q)$-resonance as in Definition \ref{def:quantumresonanace}. The quasi-energy $\rho$ of the quantum piecewise linear Fermi-Ulam accelerator \eqref{eq:quantumlinulam} is given by 
    \begin{equation}
        \rho_j(x_0) = -\dfrac{1}{2\mathcal{T}} \xi_j(x_0), \; x_0\in(0,1), \; j=0,\dots,q-1
    \end{equation}
    where $\xi_j$'s are the arguments of the eigenvalues $\lambda_j$'s of the resonant Floquet matrix $RS$ as in \eqref{eq:eigenvalue}.
\end{thm}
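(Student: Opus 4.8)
The plan is to identify the abstract quasi-energy defined by the Floquet relation \eqref{eq:quasienergydef} with the concrete spectral data $\xi_j(x_0)$ produced by Proposition \ref{prop:floqmatrix}. The key point is that, in the coordinates of \S\ref{sssec:swt}, one full physical period $\Delta t = 2T$ corresponds to the ``new time'' interval $\Delta\zeta = 2\mathcal T$ (since $\zeta = g(t) = \int_0^t l(s)^{-2}\,ds$ and $\int_0^{2T} l(s)^{-2}\,ds = 2\mathcal T$), so the monodromy of \eqref{eq:quantumlinulam} over one period is exactly the composition $\mathcal F_2 \circ \mathcal F_1$, which by Proposition \ref{prop:floqmatrix} acts on each fiber $\{x_0 + 2n/q : n = 0,\dots,q-1\}$ as the matrix $RS$. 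First I would fix $x_0 \in (0,1)$, restrict attention to the invariant $q$-dimensional subspace $\mathcal H_{x_0}$ spanned by $\{\varphi(\,\cdot\,) : \varphi \text{ supported on the fiber through } x_0\}$ (equivalently, the vector $\Phi(x_0)$ picture), and recall that $RS = Q\Lambda Q^{-1}$ with $\Lambda = \diag(e^{i\xi_j(x_0)})$, $Q$ unitary.

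Next I would construct the Floquet eigensolutions explicitly. For each $j$, let $\Phi^{(j)}(x_0)$ be the $j$-th column of $Q(x_0)$, i.e. the eigenvector of $RS$ with eigenvalue $e^{i\xi_j(x_0)}$, and let $\psi_j(\,\cdot\,,\zeta)$ be the solution of \eqref{eq:quantumlinulam} with that initial datum at $\zeta = 0-$. By construction $\psi_j(\,\cdot\,, 2\mathcal T-) = e^{i\xi_j(x_0)}\,\psi_j(\,\cdot\,,0-)$. Setting $\phi_j(\,\cdot\,,\zeta) := e^{-i\zeta\,\xi_j(x_0)/(2\mathcal T)}\,\psi_j(\,\cdot\,,\zeta)$ gives $\phi_j(\,\cdot\,,\zeta + 2\mathcal T) = e^{-i(\zeta+2\mathcal T)\xi_j/(2\mathcal T)}\psi_j(\,\cdot\,,\zeta+2\mathcal T) = e^{-i\zeta\,\xi_j/(2\mathcal T)}\,e^{-i\xi_j}\,e^{i\xi_j}\psi_j(\,\cdot\,,\zeta) = \phi_j(\,\cdot\,,\zeta)$, so $\phi_j$ is $2\mathcal T$-periodic in the new time, hence periodic in the original time $t$ with period $2T$. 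Matching with the defining form $\psi_j(t) = \phi_j(t)\,e^{-it\rho_j}$ and using that $t$ and $\zeta$ advance by one period simultaneously, the relation \eqref{eq:quasienergydef} forces $e^{-i\cdot 2T\cdot\rho_j} = e^{i\xi_j(x_0)}$, i.e. $\rho_j(x_0) = -\xi_j(x_0)/(2T) \pmod{\pi/T}$; I would then note that $2T = 2\mathcal T \cdot AB = 2\mathcal T$ under the normalization introduced when passing to $\zeta$ (the Floquet operators \eqref{eq:1floquet}\eqref{eq:2floquet} are already written with period $2\mathcal T$), which yields the stated formula $\rho_j(x_0) = -\xi_j(x_0)/(2\mathcal T)$.

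The main obstacle, and the step requiring the most care, is the bookkeeping of time normalizations and the precise sense in which the $RS$-action on the finite fiber captures the genuine (infinite-dimensional) Floquet operator: one must check that the decomposition of $L^2$ into the $x_0$-indexed fibers is preserved by the full evolution \eqref{eq:quantumlinulam} — not just by the reduced matrices — so that each $\xi_j(x_0)$ is an honest quasi-energy value of the original system and not an artifact of the reduction. This is essentially already contained in the derivation of \S\ref{sssec:rfm} (the domain-extension by symmetry and periodicity was shown there to commute with the time evolution), so I would invoke that and Proposition \ref{prop:floqmatrix} rather than redo it. A secondary subtlety is the multivaluedness: $\xi_j$ is only defined mod $2\pi$ and $\rho_j$ correspondingly mod $\pi/\mathcal T$, which is exactly the expected ambiguity of quasi-energies (the ``Brillouin zone'' of Floquet theory), so I would simply state the formula modulo that lattice. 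Everything else is the routine verification that the periodized amplitude $\phi_j$ inherits periodicity from the eigenvalue relation, which I would present compactly as above.
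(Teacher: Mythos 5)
Your proposal is correct and follows essentially the same route as the paper: the paper's proof likewise builds Floquet eigensolutions supported on the fiber $\{x_0+2n/q\}$ (written as a $\delta$-ansatz $\sum_n C_n^j(x_0)\delta(x+x_0+2n/q)$), shows the one-period monodromy acts on the coefficient vector by $RS$, and reads off $e^{-2i\mathcal T\rho_j}=e^{i\xi_j(x_0)}$, i.e. $\rho_j(x_0)=-\xi_j(x_0)/(2\mathcal T)$. The only cosmetic difference is that you invoke Proposition \ref{prop:floqmatrix} directly while the paper re-runs the index computation for the $\delta$-ansatz, and your remark on the mod-$\pi/\mathcal T$ (Brillouin-zone) ambiguity is a harmless addition.
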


Theorem \ref{thm:quasienspec} shows the quasi-energy spectrum of a $(p,q)$-resonant quantum piecewise linear Fermi-Ulam accelerator has at most $q$ components $\big\{\rho_j(x) \,| \, x\in(0,1)\big\}_{j=0}^{q-1}$. These components are absolutely continuous unless $\xi_j(x)$'s are \emph{degenerate}, i.e. $\xi_j(x) \equiv \xi_j$ for some constant $\xi_j$, and in this case the corresponding spectrum component $\big\{\rho_j(x) \,| \, x\in(0,1)\big\}$ degenerates to a point.

Meanwhile, we recall from the proof of Theorem \ref{thm:quadengrow} that the quadratic and linear coefficients $\mathfrak{a},\mathfrak{b}$ responsible for the energy growth are directly proportional to the derivatives of $\xi$ (c.f.\eqref{eq:quadcoeff} \eqref{eq:lincoeff}). Therefore degenerate $\xi_j(x)$'s eliminate the quadratic and linear energy growth for certain initial waves, i.e. those consisting of the eigenfunctions of the degenerate eigenvalues. If the system is \emph{totally degenerate}, i.e. $\xi_j(x) \equiv \xi_j$ for all $j$, then the energy remains bounded and the quasi-energy spectrum degenerates to pure point. 

\begin{rmk}
    We note that such degeneracy (partial or total) is exceptionally rare, if not completely impossible, as they form a subspace  of co-dimension at least one in all possible choices of resonant parameters. 
\end{rmk}

We provide some examples in which we are able to obtain explicit formulas of the energy growth and the quasi-energy spectra.

\begin{example}[$(1,1)$-resonance]\label{example:11resonance}
    For $p=q=1$, the computation reduces to one dimension and there is no need to resort to Floquet matrices. In fact, the Floquet operators \eqref{eq:1floquet} \eqref{eq:2floquet} in $1:1$ resonance simplify into 
    \[
    \varphi_{\mathcal{T}} (x) = e^{-i J_1 x^2} \varphi(x), \; \varphi_{2\mathcal{T}}(x) = e^{i J_2 x^2} \varphi_{\mathcal{T}}(x) \; .
    \]
    Therefore the leading quadratic coefficient for the energy growth for an initial wave $\varphi$ is  
    \begin{equation*}
        a(1,1) = 2(J_2 - J_1)^2 \int_0^1 x^2 \bar{\varphi} \varphi \; dx >0.
    \end{equation*}
    And the quasi-energy spectrum has one continuous component 
    \begin{equation*}
        \spec(1,1) = \left\{ \dfrac{(J_1 - J_2)\tilde{x}^2}{2\mathcal{T}},\; x\in(0,1) \right\} = \left(0, \dfrac{J_1 - J_2}{2\mathcal{T}} \right) \; .
    \end{equation*}
    
    This is exactly the case studied in \cite{Seba90}; our computation agrees with those therein (with minor typos in \cite{Seba90} fixed).
\end{example}

\begin{example}[$(1,2)$-resonance]
    For $p=1,q=2$, the resonant Floquet matrices are respectively 
    \begin{equation*}
        S = 
        \begin{pmatrix}
            e^{-i J_1 \tilde{x}^2} & 0\\
            0 & e^{-i J_1 \widetilde{(x+1)}^2}
        \end{pmatrix}
        \begin{pmatrix}
            0 & 1\\
            1 & 0
        \end{pmatrix}
        = 
        \begin{pmatrix}
            0 & e^{-i J_1 \tilde{x}^2}\\
            e^{-i J_1 \widetilde{(x+1)}^2} & 0
        \end{pmatrix}
    \end{equation*}
    
    and 
    
    \begin{equation*}
        R = 
        \begin{pmatrix}
            e^{i J_2 \tilde{x}^2} & 0\\
            0 & e^{i J_2 \widetilde{(x+1)}^2}
        \end{pmatrix}
        \begin{pmatrix}
            0 & 1\\
            1 & 0
        \end{pmatrix}
        = 
        \begin{pmatrix}
            0 & e^{i J_2 \tilde{x}^2}\\
            e^{i J_2 \widetilde{(x+1)}^2} & 0
        \end{pmatrix},
    \end{equation*}
    
    and consequently 
    
    \begin{equation*}
        RS = 
        \begin{pmatrix}
            e^{i J_2 \tilde{x}^2 - i J_1 \widetilde{(x+1)}^2} & 0\\
            0 & e^{i J_2 \widetilde{(x+1)}^2 - i J_1 \tilde{x}^2}
        \end{pmatrix}.
    \end{equation*}
    
    The quasi-energy spectrum has one continuous component (in fact two overlapping components) 
    \begin{align*}
        \spec(1,2) &= \left\{ \dfrac{J_1 \widetilde{(x+1)}^2 - J_2\tilde{x}^2}{2\mathcal{T}},\; x\in(0,1) \right\} \cup \left\{ \dfrac{J_1 \tilde{x}^2 - J_2\widetilde{(x+1)}^2}{2\mathcal{T}},\; x\in(0,1) \right\}\\
        &= \left(-\dfrac{J_2}{2\mathcal{T}}, \dfrac{J_1}{2\mathcal{T}} \right) \cup \left(-\dfrac{J_2}{2\mathcal{T}}, \dfrac{J_1}{2\mathcal{T}} \right) \\
        &= \left(-\dfrac{J_2}{2\mathcal{T}}, \dfrac{J_1}{2\mathcal{T}} \right) \; .
    \end{align*}

    The leading quadratic coefficient for the energy growth for a vector $\Phi(x)=\big(\varphi(x),\varphi(x+1)\big)$ for the initial wave $\varphi$ is 
    \begin{equation*}
        a(1,2) = (J_2 - J_1)^2 \int_0^1 (x^2 + (x-1)^2) \bar{\varphi} \varphi \; dx >0 \; .
    \end{equation*}
\end{example}

\vspace{10pt}

Finally, we conclude this section with the proof of Theorem \ref{thm:quasienspec}.

\begin{proof}[Proof of Theorem \ref{thm:quasienspec}]
Since we deal with a basis of initial waves, we consider waves of the following form 
\begin{equation}\label{eq:ansatz}
    \psi_{j(x_0)} (x,0) := \sum_{n=0}^{q-1} C_n^j (x_0) \delta(x + x_0 + 2n/q)
\end{equation}
where $x_0\in(0,1)$ is a parameter, and $C_n^j(x_0)$'s are coefficients depending on $x_0$ that are to be determined. 

We study the one-period evolution of such $\psi$.

By \eqref{eq:simpfloq1}, 
\begin{align*}
    \psi_{j(x_0)} (x,\mathcal{T}) &= e^{-i J_1 \tilde{x}^2} \sum_{n=0}^{q-1} \gamma_n \psi_{j(x_0)}(x + 2n/q ,0)\\
    &= \alpha_0 (x) \sum_{n=0}^{q-1} \gamma_n \sum_{m=0}^{q-1} C_m^j (x_0) \delta(x + x_0 + 2(m+n)/q)\\
    &= \sum_{m,n=0}^{q-1} \gamma_n \alpha_{m+n}(x_0) C_m^j (x_0) \delta(x + x_0 + 2(m+n)/q).
\end{align*}

Then by \eqref{eq:2floquet}, 
\begin{align}
    &\quad \quad \psi_{j(x_0)}(x,2\mathcal{T})\\
    &= e^{i J_2 \tilde{x}^2} \sum_{l=0}^{q-1} \gamma_l \psi_{j(x_0)}(x + 2l/q , \mathcal{T}) \nonumber\\
    &= \beta_0(x) \sum_{l=0}^{q-1} \gamma_l \sum_{m,n=0}^{q-1} \gamma_n \alpha_{m+n}(x_0) C_m^j (x_0) \delta(x + x_0 + 2(m+n+l)/q) \nonumber\\
    &= \sum_{l,m,n=0}^{q-1} \gamma_l \gamma_n \alpha_{m+n}(x_0) \beta_{m+n+l}(x_0) C_m^j (x_0) \delta(x + x_0 + 2(m+n+l)/q) \label{eq:ansatzev}. 
\end{align}

We claim that there exist coefficients $C_n^j(\cdot)$ such that \eqref{eq:ansatz} fulfills the requirement \eqref{eq:quasienergydef} in the quantum Floquet theory, i.e. 
\begin{align}
    \psi_{j(x_0)} (x,2\mathcal{T}) 
    &= e^{-2i\mathcal{T} \rho_j(x_0)} \psi_{j(x_0)} (x,0) \nonumber \\
    & \buildrel \eqref{eq:ansatz} \over = e^{-2i\mathcal{T} \rho_j(x_0)} \sum_{k=0}^{q-1} C_n^k (x_0) \delta(x + x_0 + 2k/q) \label{eq:ansatzreq}.
\end{align}

Indeed, equating \eqref{eq:ansatzev} and \eqref{eq:ansatzreq} in index $k$, we obtain 
\begin{align*}
    e^{-2i\mathcal{T} \rho_j(x_0)} C_n^k (x_0) 
    &= \sum_{l+m+n=k \pmod q} \gamma_l \gamma_n \alpha_{m+n}(x_0) \beta_{m+n+l}(x_0) C_m^j (x_0)\\
    &= \sum_{l+m+n=k \pmod q} \gamma_l \gamma_n \alpha_{m+n}(x_0) \beta_{k}(x_0) C_m^j (x_0)\\
    &= \sum_{m,r=0}^{q-1} \gamma_{k-r} \gamma_{r-m} \alpha_{r}(x_0) \beta_{k}(x_0) C_m^j (x_0)\\
    &= \sum_{m=0}^{q-1} \sum_{r=0}^{q-1} \underbrace{\beta_{k}(x_0) \gamma_{k-r}}_{R_{kr}} \; \underbrace{\alpha_{r}(x_0) \gamma_{r-m}}_{S_{rm}}  C_m^j (x_0)\\
    &= \sum_{m=0}^{q-1} (RS)_{km}  C_m^j (x_0)\\
    &= \big((RS)(x_0)  \mathbf{C}^j (x_0) \big)_k
\end{align*}
where $\mathbf{C}^j(x_0) := (C_0^j (x_0),\cdots,C_{q-1}^j (x_0))$. 
Therefore the claim \eqref{eq:ansatzreq} holds for $\mathbf{C}^j (x_0)$ being an eigenvector of the resonant Floquet matrix $RS(x_0)$ with respect to the eigenvalue $\lambda_j (x_0)$. 

Consequently, we obtain from 
\begin{equation*}
    \lambda_j (x_0) = e^{i\xi_j(x_0)} = e^{-2i\mathcal{T}\rho_j(x_0)}
\end{equation*}
the formula of the quasi-energy 
\begin{equation*}
    \rho_j(x_0) = -\dfrac{1}{2\mathcal{T}} \xi_j(x_0) \; .
\end{equation*}
\end{proof}

%-------------------------------------------------------------
\section{Concluding remarks}\label{sec:conc}

In this paper, we have discussed the classical piecewise linear Fermi-Ulam accelerators and its quantization. We have provided precise statements regarding the choice of parameters to produce resonance. We showed that under resonance the behavior of the quantized accelerator differs substantially from its classical origin in the sense that the classical accelerator exhibits typical recurrence/non-escaping while the quantum version enjoys quadratic energy growth in general. We also described a procedure, explicit and easily achievable, to locate the escaping orbits (though exceptionally rare) in the classical Fermi-Ulam accelerators, which in particular include Ulam's very original proposal \cite{Ulam61} and the linearly escaping orbits therein \cite{Zha98} in the existing literature, and hence provides a complete (modulo a null set) answer to Ulam's original question. 

An immediate follow-up question would be: what can we say if we lose resonance?

First of all, we emphasize that the derivation of many essential tools in this paper, such as the adiabatic normals, invariant circles, stopping the wall transformation and the Floquet operators, do not rely on the particular choice of parameters $(A,B,T)$, and hence they remain valid in the non-resonant cases as well.

However, there are several challenges to overcome. In the classical setting, we lose the skew product structure for non-resonant parameters, and the dynamics on each invariant circle is an infinite interval exchange map on $\mathbb{R}$, which, unfortunately, is a widely open subject in dynamics by itself. In the quantum setting, we believe that one should expect the absolutely continuous quasi-energy spectrum to degenerate into singularly continuous or pure point as the resonance numbers $\dfrac{\pi}{2}\dfrac{T}{AB}=\dfrac{p}{q}$ approach Diophantine or Liouville numbers, but linear algebra problems involving large matrices are in general very difficult theoretically and numerically.

%-------------------------------------------------------------
\section*{Acknowledgment}

This research was partially supported by National Key R\&D Program of China No. 2024YFA1015100. C. Dong was also grateful for the support by Nankai Zhide Foundation, ``the Fundamental Research Funds for the Central Universities" No. 100-63233106, 100-63243066, and 100-63253093.

J. Zhou gratefully acknowledges Chern Institute of Mathematics where most of the work was done during her visits, and the warm hospitality and excellent working conditions are greatly appreciated. J. Zhou was partially supported by PRC Ministry of Education Research Grant RCXMA23008 and Guangdong Research Grant No.2023QN10X122.

%-------------------------------------------------------------

\bibliography{originalFUM}
\bibliographystyle{plain}

\end{document}